\theoremstyle{plain}
\newtheorem{thm}{Theorem}[section]
\newtheorem{prop}[thm]{Proposition}
\newtheorem{cor}[thm]{Corollary}
\theoremstyle{definition}
\newtheorem{exa}[thm]{Example}
\newtheorem{rem}[thm]{Remark}
\newtheorem{defn}[thm]{Definition}
\newtheorem{construction}[thm]{Construction}
\def\Ker{\mathop{\mathrm{Ker}}\nolimits}
\def\Hom{\mathop{\mathrm{Hom}}\nolimits}
\newcommand{\lra}{\longrightarrow}
\newcommand{\ra}{\rightarrow}
\newcommand{\As}{{\rm As}}
\newcommand{\Inn}{{\rm Inn}}
\begin{document}

\title{{\bf Schur Multipliers and Second Quandle Homology}}

\author[\Coffeecup]{Rhea Palak Bakshi}
\author[\Coffeecup]{Dionne Ibarra}
\author[\Coffeecup]{Sujoy Mukherjee}
\author[$\ddag$]{Takefumi Nosaka}
\author[\Coffeecup, $\star$]{J\'{o}zef H. Przytycki}

\affil[\Coffeecup]{Department of Mathematics, The George Washington University}
\affil[$\ddag$]{Department of Mathematics, Tokyo Institute of Technology}
\affil[$\star$]{Department of Mathematics, University of Gda\'{n}sk}

\maketitle
\begin{abstract}
We define a map from second quandle homology to the Schur multiplier and examine its properties. Furthermore, we express the second homology of Alexander quandles in terms of exterior algebras. Additionally, we present a self-contained proof of its structure and provide some computational examples.
\end{abstract}

\begin{center}
\normalsize
{\bf Keywords}: quandle; group homology; rack and quandle homology; Schur multiplier; exterior algebra; central extensions; semi-Hopfian group
\end{center}

\begin{center}
{\bf Mathematical Subject Classification 2010}: Primary: 19C09; Secondary: 20J06, 57M27 
\end{center}

\tableofcontents

\section{Introduction }

\

A quandle \cite{Joy,Mat} is a set with a binary operation whose definition was motivated by knot theory. A primordial example of a quandle is a group 
with the binary operation given by conjugation. 
Rack (co)homology was introduced in \cite{FRS1,FRS3}.
It was adapted into a quandle (co)homology theory to define topological 
invariants for knots of codimension two \cite{CJKLS}. 

\

Since, there are adjoint functors between the category of quandles and the category of groups \cite{Joy}, it is natural to expect relations between the homology theories of quandles and groups. An example of such a relation is the analogy between 
central group extensions and 
quandle extensions \cite{Bro,BT,CENS}.
Since, there are well-developed methods to compute group homology \cite{Bro, BT, Kar}, 
to gain more information, maps are constructed from the homology of quandles to that of groups \cite{CEGS,EG,Kab,Nos7}. 
However, 
these correspondences are often not isomorphisms. On the other hand, it was observed that the second homology of a Takasaki quandle of odd order is isomorphic to the Schur multiplier\footnote{The subject was introduced by Schur in his work on the projective representations of finite groups \cite{Sch}. According to \cite{Kar}, the original definition of Schur multiplier $M(G)$ is the second cohomology with coefficients in $\mathbb{C}^{\times}$, the multiplicative group of invertible complex numbers. For a finite group $G$, we have $H_2(G,\mathbb{Z})=H^2(G,\mathbb{C}^{\times})$. The book by Beyl and Tappa \cite{BT} gives the definition of the Schur multiplicator, commonly known as the Schur multiplier, $M(G)$ via the Schur-Hopf formula and in this case, $M(G)$ is isomorphic to $H_2(G,\mathbb{Z})$.} of the underlying  abelian group, that is, the exterior square of the group \cite{Miller,NP}.

\

In this paper, we demonstrate a new relation between the second quandle homology and the second group homology. Section 2 reviews quandles and quandle homology.
In Section \ref{S3}, we construct a homomorphism $\mathcal{T}^*$ from the second quandle cohomology to the relative group cohomology by using diagram chasing techniques (Definition \ref{def4}). 
In Theorem \ref{thm2}, we discuss the bijection 
after localization
 and conclude Section \ref{S3} by giving an algorithm to get an explicit presentation of the homomorphism $\mathcal{T}^*$. 
In Section \ref{SQS}, we analyze the homomorphism when $X$ is an Alexander quandle and $(1-t)$ is invertible. In this case, we see a close relation between the second homology of $X$ and the exterior square $X \wedge X$. 
We conclude Section \ref{section4} by briefly discussing the homomorphism $\mathcal{T}^*$ for non-Alexander quandles. 
In Section \ref{SS5}, we give a  self-contained algebraic proof of Theorem \ref{prob1}. Finally, in the last section we discuss Alexander quandles which are connected but not quasigroups.

\ \\

\subsection*{Acknowledgments}
\

The fourth author was supported by a travel grant from MEXT, the Program for Promoting
the Enhancement of Research Universities, for his visit to George Washington University. The fifth author was partially supported by the Simons Foundation Collaboration Grant for Mathematicians - 316446 and CCAS Dean's Research Chair award.

\section{Preliminaries}
\label{S2}
\hspace*{25pt}In this section, we review basic quandle theory and the homology of groups and quandles.

\subsection{Quandles and their properties}
\label{SS21}
\hspace*{25pt}A {\it quandle} \cite{Joy,Mat} is a set, $X$, with a binary operation $* : X \times X \ra X$ such that:
\begin{enumerate}[(I)]
\item (idempotency) the identity $a* a=a $ holds for any $a \in X$ ,
\item (invertibility) the map $ (\bullet * b ): \ X \ra X$ defined by $x \mapsto x * b $ is bijective for any $b \in X$ and its inverse is denoted by  $\bullet * ^{-1} b$ ,
\item (right self distributivity) the identity $(a* b)* c=(a* c)* (b* c)$ holds for any $a,b,c \in X.$
\end{enumerate}

\

Consider an Abelian  group $X$ with an automorphism $t: X \rightarrow X$. Then, $X$ is a quandle with the operation $x* y:= t x+(1-t)y$, and is called {\it an Alexander quandle}. Notice that $X$ is also a $\mathbb{Z}[t^{\pm 1}]$-module. As a special case, if $t=-1$, the quandle is called {\it a Takasaki quandle} \cite{NP}.
In addition, every group $G $ has a quandle structure 
with operation $g* h:= h^{-1}gh $ and is called {\it a conjugation quandle}.

\

The group generated by the bijective maps $\bullet * x$ is called the {\it inner automorphism group} of $X$, and is denoted by $\mathrm{Inn}(X) $.
Observe that $\mathrm{Inn}(X) $ acts on $X$ from the right. Let $\mathrm{Stab}(x) $ be the stabilizer subgroup of $\mathrm{Inn}(X) $, for an element $x \in X$.
If the action is transitive, $X$ is said to be {\it connected}.
Furthermore, we define {\it the associated group of $X$}, $\As(X)$, by the presentation:
$$ \langle \ e_x \ \ (x \in X) \ | \ e_x e_y= e_y e_{x* y} \ \ (x,y \in X)\ \rangle. $$

\

In general, it is hard to concretely determine $\As(X)$; see \cite{Cla2} for the case of Alexander quandles. Analogous to Inn(X), As(X) acts on $X$ from the right as follows, $ e_y(x) = x*y . $ We denote this action by $X \curvearrowleft \mathrm{As}(X)$. Additionally, let $\widetilde{\mathrm{Stab}}(x_i) \subset \mathrm{As}(X) $ be the stabilizer subgroup and $\iota_i $ denote the inclusion $\widetilde{\mathrm{Stab}}(x_i) \subset \mathrm{As}(X) $.

\

Consider the commutative diagram in \eqref{33aa}.
Let $\psi : \As(X) \ra \Inn(X)$ be the homomorphism which sends $e_x$ to $ \bullet * x$. Then, we have the following group extensions:
\begin{equation}\label{33aa}{\normalsize
\xymatrix{
0 \ar[r] &
\Ker(\psi)\ar@{=}[d] \ar[rr]& & \As(X) \ar[rr]^{\psi } & & \Inn(X)\ar[r] & 0
& {\rm (exact )}\\
0 \ar[r] &
\Ker(\psi) \ar[rr] & & \widetilde{\mathrm{Stab}}(x_i) \ar[rr]^{\mathrm{res} \psi } \ar@{^{(}-_>}[u]_{ \iota_i }& & \mathrm{Stab}(x_i) \ar@{^{(}-_>}[u]_{\rm inclusion } \ar[r] & 0 & {\rm (exact )}.
}}\end{equation}
The horizontal sequences have been proven to be central extensions (see, for example, \cite[\S 2.3]{Nosbook}).

\subsection{Quandle Homology and Relative Group Homology}\label{SS22}

\

We now review the homology theory of quandles and groups.
Let $C_n^R(X)$ be the free $\mathbb{Z}$-module generated by $X^n$, i.e., $C_n^R(X) = \mathbb{Z} \langle X^n \rangle $.
For $n \leq 3$, the differential $\partial_n^R : C_n^R(X) \ra C_{n-1}^R(X) $ is defined by
\[ \partial_1^R(x):=0, \ \ \ \ \ \partial_2^R(x,y):= (x)-(x* y), \]
\begin{equation}\label{33aa2} \partial_3^R(x,y,z):= (x,z)-(x* y, z )-(x,y) +(x* z, y* z). \end{equation}


Then, the second homology groups are given by 
\begin{equation}\label{exa9}
H_2^R(X;\mathbb{Z}) := \frac{\Ker ( \partial_2^R )}{ \mathrm{Im} ( \partial_3^R ) }, \ \ \ \ \ \ \ \ H_2^Q(X;\mathbb{Z}) := \frac{\Ker ( \partial_2^R )}{ \mathrm{Im} ( \partial_3^R ) , \  \{ (a,a)\}_{a \in X } } .
\end{equation}
The former is called (two term) {\it rack homology} \cite{FRS1,FRS3}, and the latter is called {\it quandle homology} \cite{CJKLS}.
Dually, for an abelian group $A$, we can define quandle cohomology $H_Q^2(X;A)$ (see \cite{CJKLS} for details of general (co)homology).

\

\hspace*{5pt}
Next, we review relative group (co)homology (see \cite{BE,Bro}).
Let $A$ be a right $\mathbb{Z}[G]$-module.
Let $ C_{n}^{\rm gr }(G ;A ) $ be $A \otimes_{\mathbb{Z}[G]} \mathbb{Z}[G^n] $.
Define the boundary map $\partial_n( a \otimes (g_1, \dots , g_n)) \in C_{n-1}^{\mathrm{gr}}(G;A)$ by the formula:
$$ a \otimes ( g_2, \dots ,g_{n}) +\!\!\sum_{1 \leq i \leq n-1}\!\! (-1)^i a \otimes ( g_1, \dots ,g_{i-1}, g_{i} g_{i+1}, g_{i+2},\dots , g_n) +(-1)^{n} (a \cdot g_n^{-1})\otimes ( g_1, \dots, g_{n-1}) .$$
Since $\partial_{n-1} \circ \partial_n =0$, we can define the homology $H_{n}^{\rm gr }(G ;A) $ in the usual way.
 As mentioned before, if $A$ is the trivial $\mathbb{Z}[G]$-module, the second homology is often referred to as the {\it Schur multiplier} (see \cite{Bro,BT,Kar} for basic references).

\

Let $K \subset G$ be groups and set up the cochain groups as
$$C^n_{\rm gr}( G, K; A ):= \mathrm{Map} ( G^n , A ) \oplus \mathrm{Map }((K)^{n-1} , A ) . $$
For $(h,k ) \in C^n_{\rm gr}( G, K;A ) $, define
        $ \partial^n(h,k ): G^{n+1} \times K^n \rightarrow A $ 
        by the formula
$$ \partial^n \bigl(h,k \bigr)( a, b )= \bigl( h( \partial_{n+1} (a)), \ h (b) -k(\partial_n(b)) \bigr),$$
where $( a, b ) \in G^{n+1} \times K^{n} $. Then, we have a cochain complex $ (C^*_{\rm gr}( G, K ; A ), \partial^*)$, and we define the cohomology in the standard way. Since $C^n_{\rm gr}( G, K; A ) $ is defined as a mapping cone, we have the long exact sequence:
\begin{equation}\label{exa1}
{\cdots \ra H_{\rm gr }^{n-1}(K;A) \stackrel{\delta_{n-1}^*}{\ra} H^n_{\rm gr}(G,K;A) \ra H^n_{\rm gr}(G;A) \ra H^n_{\rm gr}(K;A) \stackrel{\delta_{n}^*}{\ra} H^{n+1}_{\rm gr}(G;A) \ra \cdots.}
 \end{equation}

\

\hspace*{5pt}Furthermore, for any central extension $ 0 \ra K \ra G \ra \mathcal{G} \ra 0$ and any trivial coefficient $A $, we recall the exact sequence
\begin{equation}\label{exa2} 0 \ra H^1_{\rm gr}(\mathcal{G};A ) \ra H^1_{\rm gr}(G;A ) \stackrel{\delta^*_1}{ \ra} \Hom(K,A )\ra H^2_{\rm gr} (\mathcal{G} ;A ) \ra  H^2_{\rm gr}(G;A ) , \end{equation}
which is called {\it the 5-terms exact sequence} (see, e.g., \cite[Chapter II.5]{Bro}).

\

\hspace*{5pt}
We now recall some known results in quandle homology. 
Let $X$ be a quandle.
Due to the action $X \curvearrowleft \Inn(X)$, we have the orbit decomposition $X= \sqcup_{i\in I} X_i$, where $I$ is the set of the orbits.
For $i \in I$, we choose $x_i \in X_i$.
Consider the induced map $(\iota_i)_*: H_1(\mathrm{Stab}(x_i);\mathbb{Z})\ra H_1(\As(X);\mathbb{Z})$ from the inclusion $\iota_i $ and the homomorphism $ \epsilon_X : \As(X)\ra \mathbb{Z} $ which sends $e_x$ to $1$.
Notice that, since $e_{x_i } \in \widetilde{\mathrm{Stab}}(x_i ) $, the induced map
$$ \iota_* : H_1^{\rm gr}(\widetilde{\mathrm{Stab}}(x_i) ) \lra \mathrm{Im}( (\iota_i)_*) =\mathbb{Z} $$
splits. Thus, we can fix the projection
\begin{equation}\label{lp} \mathcal{P}_i: H_1^{\rm gr}(\widetilde{\mathrm{Stab}}(x_i) ;\mathbb{Z} ) \lra \Ker( (\iota_i)_* ) \subset H_1^{\rm gr}(\widetilde{\mathrm{Stab}}(x_i) ;\mathbb{Z}) .\end{equation}
Then, Eisermann \cite{Eis3} proved the following result about second rack and quandle homologies:
\begin{thm}[{\cite{Eis3}}]\label{rthm1} 
There are isomorphisms:
\begin{equation}\label{33aab} H_2^R(X) \cong \bigoplus_{i \in I } H_1(\widetilde{\mathrm{Stab}}(x_i) ;\mathbb{Z} ) \ \ \ \ \ \ \ and \ \ \ \ \ \ \ H_2^Q (X;\mathbb{Z}) \cong \bigoplus_{i \in I } \Ker((\iota_i)_*). \end{equation}
Moreover, the map $ H_2^R (X;\mathbb{Z}) \ra H_2^Q (X;\mathbb{Z})$ from the projection in \eqref{exa9} is equal to the direct sum of maps described in equation \eqref{lp}, that is, $\oplus_{i \in I} \mathcal{P}_i $.
\end{thm}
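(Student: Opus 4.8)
The plan is to reduce the statement to a computation in the homology of the associated group $\As(X)$ and then apply Shapiro's lemma orbit by orbit. First I would observe that the rack complex splits along orbits: the first coordinates appearing in $\partial_3^R(a,b,c)$ are $a, a*b, a, a*c$, all lying in the $\Inn(X)$-orbit of $a$ (and similarly $a, a*b$ for $\partial_2^R$), so the subgroups $\mathbb{Z}\langle X_i\times X^{n-1}\rangle$ form subcomplexes and $C_*^R(X)=\bigoplus_{i\in I}C_*^R(X)_i$. Hence $H_2^R(X)=\bigoplus_i H_2(C_*^R(X)_i)$, and it suffices to treat one orbit. Using the transitive action $X_i\curvearrowleft\As(X)$ I would identify $X_i\cong\widetilde{\mathrm{Stab}}(x_i)\backslash\As(X)$, so that $\mathbb{Z}[X_i]$ is the induced permutation module $\mathbb{Z}\otimes_{\mathbb{Z}[\widetilde{\mathrm{Stab}}(x_i)]}\mathbb{Z}[\As(X)]$.

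Next I would identify the orbit complex $C_*^R(X)_i$, in degrees $\le 3$, with the low-degree part of $\mathbb{Z}[X_i]\otimes_{\As(X)}P_\bullet$, where $P_\bullet\to\mathbb{Z}$ is the free resolution of $\mathbb{Z}$ coming from the presentation $\As(X)=\langle e_x\mid e_xe_y=e_ye_{x*y}\rangle$: here $P_0=\mathbb{Z}[\As(X)]$, $P_1$ is free on the generators $e_x$, and $P_2$ is free on the relators $r_{x,y}$. After tensoring, the chain groups become $\mathbb{Z}[X_i]$, $\mathbb{Z}\langle X_i\times X\rangle$, $\mathbb{Z}\langle X_i\times X^2\rangle$, matching $C_1^R(X)_i, C_2^R(X)_i, C_3^R(X)_i$. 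The crucial point is that the boundary $P_2\to P_1$ computed by Fox derivatives, namely $r_{x,y}\mapsto\langle x\rangle-\langle y\rangle+e_x\langle y\rangle-e_y\langle x*y\rangle$, reproduces $\partial_3^R$ after tensoring with $\mathbb{Z}[X_i]$ (up to an explicit change of basis), and likewise $P_1\to P_0$ versus $\partial_2^R$. Since any presentation yields a complex exact at $P_0$ and $P_1$, this gives $H_2^R(X)_i\cong\mathrm{Tor}_1^{\As(X)}(\mathbb{Z}[X_i],\mathbb{Z})=H_1(\As(X);\mathbb{Z}[X_i])$, and Shapiro's lemma yields $H_1(\As(X);\mathbb{Z}[X_i])\cong H_1(\widetilde{\mathrm{Stab}}(x_i);\mathbb{Z})$. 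Summing over $i$ gives the first isomorphism. Concretely, the class of a $2$-cycle is represented by the \emph{trajectory} of a word in the $e_x$ fixing $x_i$, which makes the isomorphism explicit.

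For the quandle statement and the \emph{moreover}, I would trace the degenerate generators $(a,a)$, $a\in X_i$, through the isomorphism. Since $a*a=a$, the element $e_a$ fixes $a$ and its trajectory cycle is precisely $(a,a)$. Writing $a=x_i\cdot g$ and using $e_{x\cdot g}=g^{-1}e_x g$ (a consequence of $e_{x_i*y}=e_y^{-1}e_{x_i}e_y$), the corresponding element of $\widetilde{\mathrm{Stab}}(x_i)$ is $g e_a g^{-1}=e_{x_i}$. Hence every degenerate generator in the $i$-th orbit maps to the single class $[e_{x_i}]\in H_1(\widetilde{\mathrm{Stab}}(x_i);\mathbb{Z})$, which by \eqref{lp} is exactly the distinguished element splitting $\iota_*$ (note $\epsilon_X([e_{x_i}])=1$, so $[e_{x_i}]$ generates $\mathrm{Im}((\iota_i)_*)\cong\mathbb{Z}$). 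Therefore the degenerate subgroup is $\mathbb{Z}\cdot[e_{x_i}]$, the chosen complement of $\Ker((\iota_i)_*)$, and quotienting gives $H_2^Q(X)\cong\bigoplus_i H_1(\widetilde{\mathrm{Stab}}(x_i);\mathbb{Z})/\mathbb{Z}[e_{x_i}]\cong\bigoplus_i\Ker((\iota_i)_*)$, with the projection $H_2^R(X)\to H_2^Q(X)$ equal summandwise to $\mathcal{P}_i$.

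The main obstacle I expect is the chain-level comparison of the second paragraph: verifying that the Fox-derivative boundary of the presentation complex agrees with $\partial_3^R$ after tensoring with $\mathbb{Z}[X_i]$. A naive term-by-term identification does not match on the nose, since the two boundaries differ by terms involving the inner action (e.g. a $b$ versus a $b*c$ in the second coordinate), so one must produce an explicit change of basis on $\mathbb{Z}\langle X_i\times X\rangle$ and $\mathbb{Z}\langle X_i\times X^2\rangle$ (or a chain homotopy) intertwining the two differentials, or else argue the comparison by acyclic models. Everything else — the orbit splitting, Shapiro's lemma, and the bookkeeping of the degenerate classes — is then routine.
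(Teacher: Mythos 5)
Your argument is essentially correct, but note that the paper contains no proof of this statement to compare against: Theorem \ref{rthm1} is quoted from Eisermann \cite{Eis3}, whose own proof runs through the theory of quandle coverings and extensions, and Remark \ref{kk} records that this route rests on topological properties of the rack space, with the explicit chain-level formula for the isomorphism quoted from \cite[\S 5.2]{Nosbook}. Your route --- splitting the rack complex along the $\Inn(X)$-orbit of the first coordinate, identifying the orbit summand with $\mathbb{Z}[X_i]\otimes_{\mathbb{Z}[\As(X)]}P_\bullet$ for the partial free resolution coming from the defining presentation of $\As(X)$, and then applying Shapiro's lemma --- is the homological counterpart of the Etingof--Gra\~{n}a isomorphism (Theorem \ref{rthm3}) and is close in spirit to the algebraic proof in \cite{Nosbook}; what it buys is a purely algebraic, self-contained derivation with the explicit ``trajectory'' representatives built in, at the cost of the chain-level comparison. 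On that comparison, your flagged ``main obstacle'' actually dissolves: with the relator $r_{x,y}=e_xe_ye_{x*y}^{-1}e_y^{-1}$ and the basis identification $(a,x,y)\leftrightarrow a\otimes r_{x,y}$, the tensored Fox boundary is $a\otimes d_2(r_{x,y})=(a,x)+(a*x,y)-(a,y)-(a*y,x*y)=-\partial_3^R(a,x,y)$, and likewise $a\otimes d_1\langle x\rangle=(a*x)-(a)=-\partial_2^R(a,x)$, so the two complexes agree on the nose after rescaling by $(-1)^n$ in degree $n$; no change of basis, chain homotopy, or acyclic models is needed. One genuine wrinkle, which you inherit from the paper's own phrasing: for non-connected quandles $\mathrm{Im}((\iota_i)_*)$ need not be $\mathbb{Z}$ --- for the two-element trivial quandle it is $\mathbb{Z}^2$, with $\bigoplus_i\Ker((\iota_i)_*)=0$ while $H_2^Q=\mathbb{Z}^2$ --- so your claim that $\epsilon_X([e_{x_i}])=1$ forces $\mathrm{Im}((\iota_i)_*)\cong\mathbb{Z}$ is not valid as stated, and the theorem must be read with $(\iota_i)_*$ composed with $(\epsilon_X)_*$ (or restricted to the connected case, where $\epsilon_*\colon H_1(\As(X))\to\mathbb{Z}$ is an isomorphism); with that correction your degenerate-class bookkeeping, including $e_a=g^{-1}e_{x_i}g$ for $a=x_i\cdot g$ and the identification of every $(a,a)$, $a\in X_i$, with the single class $[e_{x_i}]$ (which can also be seen directly from $\partial_3^R(a,a,y)=(a*y,a*y)-(a,a)$), correctly yields $H_2^Q(X)\cong\bigoplus_i\Ker$ and identifies the projection with $\oplus_{i\in I}\mathcal{P}_i$.
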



\

\hspace*{5pt}
Let us define the type of $X$ by
$$ \mathrm{Type}(X)= \mathrm{min}\{ \ n \ | \ \ ( \cdots (x \overbrace{* y )\cdots ) * y}^{n\textrm{-times}} =x \ \
\mathrm{for \ any\ } x, y \in X \ \
\} \in \mathbb{N} \cup \{ \infty \}. $$
Regarding the second group homologies of $\As(X)$, the fourth author showed the following:
\begin{thm}[{\cite{Nos7}}]\label{rthm2}Let $X$ be a connected quandle. Then, $H_2^{\rm gr}(\As(X);\mathbb{Z} ) $ is annihilated by $\mathrm{Type}(X) $.
In particular, by the 5-terms exact sequence \eqref{exa2}, there is an isomorphism $ \Ker (\psi )_{(\ell)} \cong H_2^{\rm gr}(\Inn(X) ;\mathbb{Z} )_{(\ell)} \oplus \mathbb{Z}_{(\ell )}$ for any prime $\ell$ with $(\ell, \mathrm{Type}(X))=1$.
\end{thm}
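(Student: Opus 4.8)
The plan is to first translate the type condition into a statement about the center of $\As(X)$, and then feed that into the (homology) $5$-term exact sequence. The defining relation $e_x e_y = e_y e_{x* y}$ rewrites as $e_y^{-1} e_x e_y = e_{x* y}$, so conjugation by $e_y$ realizes the map $\bullet * y$ on generators. Iterating, $e_y^{-n} e_x e_y^{n} = e_{(\cdots (x* y)\cdots)* y}$ with $n$ copies of $* y$; taking $n=\mathrm{Type}(X)=:N$ and invoking the definition of type gives $e_y^{-N} e_x e_y^{N}=e_x$ for all $x,y$. Since the $e_x$ generate $\As(X)$, each $e_y^{N}$ is \emph{central}. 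Two further observations will be needed: applying $\psi$ shows $\psi(e_y^N)=(\bullet * y)^N=\mathrm{id}$, so $e_y^N\in\Ker(\psi)$; and since $X$ is connected, all $e_x$ coincide in $\As(X)^{\rm ab}$, whence $\As(X)^{\rm ab}\cong\mathbb{Z}$ with $\epsilon_X$ the abelianization and $\epsilon_X(e_y^N)=N$.

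For the annihilation statement I would exploit the splitting of $\epsilon_X\colon\As(X)\to\mathbb{Z}$ (which exists because $\mathbb{Z}$ is free, with section $1\mapsto e_{x_0}$) to write $\As(X)\cong\ker(\epsilon_X)\rtimes\mathbb{Z}$. Let $\tau$ be the monodromy automorphism induced by conjugation by $e_{x_0}$. The associated Wang exact sequence then presents $H_2^{\rm gr}(\As(X))$ as an extension of $\ker(\tau-1)$ acting on $H_1^{\rm gr}(\ker\epsilon_X)=\ker(\epsilon_X)^{\rm ab}$ by $\mathrm{coker}(\tau-1)$ acting on $H_2^{\rm gr}(\ker\epsilon_X)$. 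Because $e_{x_0}^N$ is central, $\tau^N$ acts trivially on $\ker\epsilon_X$, hence on all of its homology, so the $\mathbb{Z}$-action factors through $\mathbb{Z}/N$. The main obstacle is precisely to upgrade ``$\tau$ has order dividing $N$'' to ``$N$ annihilates these invariants and coinvariants'': finite order alone does not bound the torsion of a fixed or cofixed submodule, so this is where the transitivity of the $\Inn(X)$-action must enter, to describe $\ker(\epsilon_X)^{\rm ab}$ as a module whose $\tau$-fixed part is itself $N$-torsion. I expect this homological bookkeeping to be the technical heart of the proof.

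Granting the annihilation, the localization isomorphism is a formal consequence of the homology analogue of \eqref{exa2} for the central extension $0\to\Ker(\psi)\to\As(X)\stackrel{\psi}{\to}\Inn(X)\to 0$, namely
$$ H_2^{\rm gr}(\As(X)) \to H_2^{\rm gr}(\Inn(X)) \stackrel{d}{\to} \Ker(\psi) \to H_1^{\rm gr}(\As(X)) \stackrel{\bar\psi}{\to} H_1^{\rm gr}(\Inn(X)) \to 0. $$
Fix a prime $\ell$ with $(\ell,N)=1$ and apply the exact functor $(-)_{(\ell)}$. By the annihilation statement $H_2^{\rm gr}(\As(X))_{(\ell)}=0$, so $d_{(\ell)}$ is injective; meanwhile $H_1^{\rm gr}(\As(X))=\As(X)^{\rm ab}\cong\mathbb{Z}$, and its generator maps under $\bar\psi$ to the class of $\bullet * x_0$, an element of order dividing $N$ in $\Inn(X)$ (again by the type condition), so $\bar\psi_{(\ell)}=0$ after localization. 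The localized sequence therefore collapses to
$$ 0 \to H_2^{\rm gr}(\Inn(X))_{(\ell)} \to \Ker(\psi)_{(\ell)} \to \mathbb{Z}_{(\ell)} \to 0, $$
which splits because $\mathbb{Z}_{(\ell)}$ is free over itself, yielding $\Ker(\psi)_{(\ell)}\cong H_2^{\rm gr}(\Inn(X))_{(\ell)}\oplus\mathbb{Z}_{(\ell)}$ as claimed.
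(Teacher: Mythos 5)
Your preliminary reductions are correct, and so is the second half of your argument. The iteration $e_y^{-n}e_xe_y^{n}=e_{(\cdots(x*y)\cdots)*y}$ does show that $e_y^{N}$ is central in $\As(X)$ for $N=\mathrm{Type}(X)$ (implicitly assumed finite, as in the theorem itself), that $e_y^N\in\Ker(\psi)$, and that connectedness gives $\As(X)^{\rm ab}\cong\mathbb{Z}$ via $\epsilon_X$. Granting the annihilation of $H_2^{\rm gr}(\As(X);\mathbb{Z})$ by $N$, your localization of the homological five-term sequence of the central extension $0\to\Ker(\psi)\to\As(X)\stackrel{\psi}{\to}\Inn(X)\to 0$ is exactly the deduction the paper intends by ``In particular, by the 5-terms exact sequence \eqref{exa2}'' (the paper states the cohomological version, but your homological version is the right tool): $H_2^{\rm gr}(\As(X))_{(\ell)}=0$ makes $H_2^{\rm gr}(\Inn(X))_{(\ell)}\to\Ker(\psi)_{(\ell)}$ injective, the generator of $H_1^{\rm gr}(\As(X))\cong\mathbb{Z}$ maps to the class of $\bullet * x_0$, which is killed by $N$ and hence vanishes after localizing at $\ell$ coprime to $N$, and the resulting short exact sequence splits since $\mathbb{Z}_{(\ell)}$ is free. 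That part is sound.

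The genuine gap is the theorem's main assertion, and you flag it yourself: your Wang-sequence framework presents $H_2^{\rm gr}(\As(X))$ as an extension of $\Ker\bigl(\tau_*-1\ \text{on}\ H_1^{\rm gr}(\ker\epsilon_X)\bigr)$ by $\mathrm{Coker}\bigl(\tau_*-1\ \text{on}\ H_2^{\rm gr}(\ker\epsilon_X)\bigr)$, and centrality of $e_{x_0}^N$ gives only $\tau^N=\mathrm{id}$. As you correctly observe, finite order of $\tau$ imposes no bound on the torsion of these invariants and coinvariants: in the extreme case $\tau_*=\mathrm{id}$ the Wang sequence yields $H_2^{\rm gr}(K\rtimes\mathbb{Z})\cong H_2^{\rm gr}(K)\oplus H_1^{\rm gr}(K)$, which can be torsion-free of large rank, and nothing in your outline rules this behavior out for $\ker(\epsilon_X)$ — connectedness has so far entered only through $\As(X)^{\rm ab}\cong\mathbb{Z}$, which is far too weak, since many groups $K\rtimes\mathbb{Z}$ with abelianization $\mathbb{Z}$ and monodromy of order dividing $N$ have $H_2$ that is not $N$-torsion. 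So the entire content of Theorem \ref{rthm2} remains unproved in your proposal; you have deferred it to unspecified ``homological bookkeeping'' without supplying the quandle-specific mechanism that forces $N$-torsion. Note that the paper offers no proof to compare against here: Theorem \ref{rthm2} is imported from \cite{Nos7}, where the annihilation is established by a finer analysis of central extensions of $\As(X)$ exploiting the defining relations $e_xe_y=e_ye_{x*y}$, not as a formal consequence of $\tau^N=\mathrm{id}$. As written, your attempt proves the ``in particular'' clause conditionally and leaves the heart of the theorem open.
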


\
\

In addition, regarding the rack second cohomology, we recall the following result by Etingof and Gra\~{n}a:
\begin{thm}[{\cite{EG}}]\label{rthm3} For any quandle $X$ and any abelian group $A$, there is an isomorphism
$$H^2_{R}( X;A) \cong H^1_{\rm gr} (\As (X); \ \mathrm{Map}(X,A)), $$
where $\mathrm{Map}(X,A)$ is regarded as a right $\mathbb{Z}[\As(X)]$-module.
\end{thm}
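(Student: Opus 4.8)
The plan is to prove the isomorphism directly at the level of cocycles and coboundaries in the relevant degrees (rack degree $2$ against group degree $1$), exploiting the presentation of $\As(X)$ together with the fact that a group $1$-cocycle is pinned down by its values on generators, subject only to the defining relations. Throughout I write $G=\As(X)$ and take $M=\mathrm{Map}(X,A)$ with the right $\mathbb{Z}[G]$-module structure induced by the right action $x\cdot e_y=x*y$ of $G$ on $X$; recall that $H^1_{\rm gr}(G;M)=Z^1_{\rm gr}(G;M)/B^1_{\rm gr}(G;M)$, where $Z^1_{\rm gr}$ consists of crossed homomorphisms. With the module conventions fixed so as to agree with the $g_n^{-1}$ appearing in the relative-group boundary of Section \ref{SS22}, the crossed-homomorphism condition can be written in the inverse-free ``concatenation'' form $\phi(gh)(x)=\phi(g)(x)+\phi(h)(x\cdot g)$.

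First I would set up the dictionary between the two sides. A rack $2$-cochain is an arbitrary function $\theta\colon X^2\to A$, which I identify with the assignment on generators $e_y\mapsto\phi_\theta(e_y)\in M$ defined by $\phi_\theta(e_y)(x)=\theta(x,y)$. Since $G$ is generated by the $e_x$, any crossed homomorphism is determined by such an assignment; conversely, by the standard Fox-calculus criterion ($Z^1_{\rm gr}(G;M)=\{d\in\mathrm{Der}(F,M):d(r)=0\ \forall r\}$ for a presentation $G=F/\langle\langle R\rangle\rangle$), an assignment on generators extends to a unique crossed homomorphism exactly when it is compatible with each defining relator $e_xe_y(e_ye_{x*y})^{-1}$, i.e.\ when $d(e_xe_y)=d(e_ye_{x*y})$.

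Carrying out this core computation, the concatenation rule gives
\[ \phi_\theta(e_ye_z)(x)=\phi_\theta(e_y)(x)+\phi_\theta(e_z)(x\cdot e_y)=\theta(x,y)+\theta(x*y,z), \]
\[ \phi_\theta(e_ze_{y*z})(x)=\phi_\theta(e_z)(x)+\phi_\theta(e_{y*z})(x\cdot e_z)=\theta(x,z)+\theta(x*z,y*z), \]
so the relator-compatibility condition becomes precisely the rack $2$-cocycle identity $\theta(x,y)+\theta(x*y,z)=\theta(x,z)+\theta(x*z,y*z)$ read off from $\partial_3^R$. Hence $\theta\mapsto\phi_\theta$ restricts to a bijection $Z^2_R(X;A)\xrightarrow{\ \sim\ }Z^1_{\rm gr}(G;M)$. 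It then remains to match coboundaries: a rack $2$-coboundary has the form $\theta(x,y)=\eta(x)-\eta(x*y)$ for some $\eta\colon X\to A$, and I would verify that, with the sign convention chosen coherently, this corresponds under $\theta\mapsto\phi_\theta$ to the principal derivation $g\mapsto(x\mapsto\eta(x\cdot g)-\eta(x))$ determined by $\eta\in M$, so that $B^2_R(X;A)$ maps isomorphically onto $B^1_{\rm gr}(G;M)$. Passing to quotients then yields the claimed isomorphism $H^2_R(X;A)\cong H^1_{\rm gr}(\As(X);\mathrm{Map}(X,A))$.

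The hard part is conceptually small but technically delicate: the bookkeeping of conventions. The right $\mathbb{Z}[G]$-module structure on $\mathrm{Map}(X,A)$, the placement of inverses in the crossed-homomorphism rule, and the signs must all be fixed simultaneously so that the cocycle identity and the coboundary identity match at once, rather than matching only up to a global sign or up to replacing $\theta(x,y)$ by $\theta(x\cdot e_y^{-1},y)$. The genuinely essential observation—the one that makes the theorem true—is that the single family of rack relations is literally the relator-compatibility condition for the presentation $\langle e_x\mid e_xe_y=e_ye_{x*y}\rangle$; once this is recognized, both directions of the bijection, and in particular the extension of generator data to an honest derivation on all of $\As(X)$, follow formally from the universal property of the presentation.
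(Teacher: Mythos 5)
Your proposal is correct, but note that the paper itself contains no proof of this statement: Theorem \ref{rthm3} is quoted from Etingof--Gra\~{n}a \cite{EG} as a known result, so there is no internal argument to compare against. On its merits, your argument is sound and self-contained: the identification $\phi_\theta(e_y)(x)=\theta(x,y)$, the Fox-calculus extension criterion (which applies here because the coefficients are pulled back from $\As(X)$, so a derivation on the free group vanishing on each relator automatically vanishes on the normal closure), and the computation showing that compatibility with $e_ye_z=e_ze_{y*z}$ is verbatim the vanishing of $\theta$ on $\partial_3^R(x,y,z)$ are all correct. The coboundary matching is in fact cleaner than you hedge: with your concatenation convention $\phi(gh)(x)=\phi(g)(x)+\phi(h)(x\cdot g)$, one has $\phi_{\delta\eta}=d_{-\eta}$, and since $\eta\mapsto-\eta$ is a bijection of $\mathrm{Map}(X,A)$, the image of $B^2_R(X;A)$ is exactly $B^1_{\rm gr}(G;M)$ with no residual sign defect. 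The one genuine bookkeeping point is the one you flag: the statement declares $\mathrm{Map}(X,A)$ a \emph{right} $\mathbb{Z}[\As(X)]$-module (consistent with the $a\cdot g_n^{-1}$ in the paper's boundary formula in \S\ref{SS22}), whereas your concatenation rule is the left-module form; dualizing with the right-module convention instead produces the cocycle identity for the twisted cochain $\theta'(x,y)=\theta(x\cdot e_y^{-1},y)$, and one passes between the two via the opposite-group isomorphism $g\mapsto g^{-1}$, which does not affect $H^1$. Two small remarks: your proof never uses idempotency, so it actually establishes the rack-level statement, which is the correct generality for $H^2_R$; and this direct presentation-based verification is specific to the degree pairing $(2,1)$ --- it is the essential content of \cite{EG} in this degree, though their paper frames it slightly more structurally.
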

\noindent
Although this isomorphism is elegant, it does not give much insight on how to compute the rack cohomology. In fact, there are only a limited numbers of examples obtained
from Theorem \ref{rthm3}.

\section{Relation of quandle homology to Schur multipliers} 
\label{S3}

\

\hspace*{5pt}In this section, we will define a map that gives a connection between the second quandle cohomology and Schur multipliers. Let $X$ be a quandle, and $A$ a trivial coefficient module. Recall the orbit decomposition $X= \sqcup_{i\in I} X_i$, and choose $x_i \in X_i$, as in the previous section.

For $i\in I$, consider the following commutative diagram:
\begin{equation}\label{33aad}{ \footnotesize
\xymatrix{
0 \ar[d]& & & & & \\
H^1_{\rm gr } ( \mathrm{Stab}(x_i);A ) \ar[d] & & & & & \\
H^2_{\rm gr } (\mathrm{Inn}(X), \mathrm{Stab}(x_i) ;A ) \ar[d]_{p^* } & & & & & \\
H^2_{\rm gr } (\mathrm{Inn}(X);A )\ar[d]_{\rm inclusion^* } &
\Hom ( \Ker(\psi), A) \ar@{=}[d] \ar[l]_{\delta_1^* }& H^1_{\rm gr } (\As(X);A)\ar[l] \ar[d]_{\rm \iota_i^* } & H^1_{\rm gr } ( \Inn(X);A)\ar[l] \ar[d]_{\rm inclusion^* } & 0 \ar[l]
\\
H^2_{\rm gr } (\mathrm{Stab}(x_i);A) &
\Hom ( \Ker(\psi), A) \ar[l]_{\delta_1^* } & H^1_{\rm gr } (\widetilde{\mathrm{Stab}}(x_i);A) \ar[l]_{\mathrm{res} \psi ^* } & H^1_{\rm gr } (\mathrm{Stab}(x_i);A) \ar[l] & 0.  \ar[l]
}}\end{equation}
Here, the horizontal sequences arise from the 5-terms exact sequences, and the left vertical sequence is obtained from the long exact sequence \eqref{exa1}. 

\begin{defn}\label{def4}
Using the above diagram, we define the homomorphism
\begin{equation}\label{Tdef} \mathcal{T}_i : H^1_{\rm gr} (\widetilde{\mathrm{Stab}}(x_i) ; A ) \lra \frac{ H^2_{\rm gr} ( \mathrm{Inn}(X), \ \mathrm{Stab}(x_i) ;A ) }{ H^1_{\rm gr}( \mathrm{Stab}(x_i) ; A )} \end{equation}
by setting
$$ \mathcal{T}_i ( \phi) := (p^*)^{-1} (\delta_1^* \circ \mathrm{res} \psi^* \circ \mathcal{P}^*_i( \phi)) .$$
Here, $\mathcal{P}_i^*$ is a dual of \eqref{lp}. By diagram chasing, we can easily verify that $\mathcal{T}_i$ is well-defined.
\end{defn}
\begin{rem}\label{rs} By definition the kernel $\Ker( \mathcal{T}_i) $ contains the image $\iota^*_i  (H^1_{\rm gr} ( \As(X); A ) )$. By Theorem \ref{rthm1}, the domain of the sum $\oplus_{i \in I} \mathcal{T}_i$ can be replaced by $H^2_Q(X;A )$. Namely,
\begin{equation}\label{kkkk}
\bigoplus_{i \in I} \mathcal{T}_i : H^Q_2(X;A ) \lra \bigoplus_{i\in I } \frac{H^2_{\rm gr} ( \mathrm{Inn}(X), \ \mathrm{Stab}(x_i) ;A ) }{ H^1_{\rm gr}( \mathrm{Stab}(x_i) ; A )} . 
\end{equation} 
\end{rem}

\

In the connected case, we have the following results pertaining to the homomorphism $ \mathcal{T}_i = \mathcal{T}$.
\begin{thm}\label{thm2}
Let $X$ be a connected quandle, and fix $x_0 \in X.$
Let $\ell$ be a prime number which is coprime to $\mathrm{Type}(X)$.
Suppose that the localization $ H^1_{\rm gr} ( \mathrm{Stab}(x_0); A )_{(\ell)}$ is zero. 
Then, the localized map $\mathcal{T}_{(\ell)}$ gives an isomorphism
$$\mathcal{T}_{(\ell)}: H^2_Q ( X;A)_{(\ell)}\stackrel{\sim}{\lra} H^2_{\rm gr} ( \mathrm{Inn}(X), \ \mathrm{Stab}(x_0) ;A )_{(\ell)} . $$
\end{thm}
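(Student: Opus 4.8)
The plan is to localize the whole diagram \eqref{33aad} at $\ell$ and to read off $\mathcal{T}_{(\ell)}$ as the inverse of an injection composed with a map whose kernel and image I pin down by chasing the two localized $5$-term sequences. Throughout write $G=\Inn(X)$, $\widetilde{G}=\As(X)$, $S=\Stab(x_0)$, $\widetilde{S}=\widetilde{\Stab}(x_0)$ and $K=\Ker(\psi)$. Localization at $\ell$ is exact, so every row and column of \eqref{33aad} remains exact after applying $(-)_{(\ell)}$. Since $X$ is connected there is a single orbit, so by Remark \ref{rs} the source is $H^2_Q(X;A)_{(\ell)}$; and since $H^1_{\rm gr}(S;A)_{(\ell)}=0$, the localized left column shows that $p^*_{(\ell)}$ is injective with image $\Ker\bigl(\mathrm{incl}^*\colon H^2_{\rm gr}(G;A)_{(\ell)}\to H^2_{\rm gr}(S;A)_{(\ell)}\bigr)$, while the quotient defining the target of $\mathcal{T}$ collapses to $H^2_{\rm gr}(G,S;A)_{(\ell)}$. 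Thus it suffices to prove that the composite $\Theta:=\delta_1^{*,\mathrm{top}}\circ\mathrm{res}\psi^{*}\circ\mathcal{P}_0^{*}$ is injective with image exactly $\Ker(\mathrm{incl}^*)$, for then $\mathcal{T}_{(\ell)}=(p^*_{(\ell)})^{-1}\circ\Theta$ is an isomorphism.

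The substantive input is Theorem \ref{rthm2}. For a connected quandle, abelianizing the defining relations of $\As(X)$ forces $e_x=e_{x*y}$ for all $x,y$, so $\widetilde{G}^{\mathrm{ab}}\cong\Z$, generated by the class of $e_{x_0}$; hence the universal coefficient theorem gives $H^2_{\rm gr}(\widetilde{G};A)\cong\Hom(H_2^{\rm gr}(\widetilde{G};\Z),A)$ with no $\mathrm{Ext}$ term. By Theorem \ref{rthm2} the group $H_2^{\rm gr}(\widetilde{G};\Z)$ is annihilated by $\mathrm{Type}(X)$, so the same holds for $H^2_{\rm gr}(\widetilde{G};A)$, and therefore $H^2_{\rm gr}(\widetilde{G};A)_{(\ell)}=0$ because $\ell$ is coprime to $\mathrm{Type}(X)$. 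Feeding this into the localized top $5$-term sequence \eqref{exa2} shows that the transgression $\delta_1^{*,\mathrm{top}}\colon\Hom(K,A)_{(\ell)}\to H^2_{\rm gr}(G;A)_{(\ell)}$ is \emph{surjective}, with kernel the image of the restriction $\mathrm{res}_K\colon H^1_{\rm gr}(\widetilde{G};A)_{(\ell)}\to\Hom(K,A)_{(\ell)}$ (the unlabeled top arrow of \eqref{33aad}). The square through the two connecting maps commutes along the identity on $\Hom(K,A)$, so $\mathrm{incl}^*\circ\delta_1^{*,\mathrm{top}}=\delta_1^{*,\mathrm{bot}}$ on $\Hom(K,A)_{(\ell)}$; combining surjectivity of $\delta_1^{*,\mathrm{top}}$ with exactness of the bottom row at $\Hom(K,A)_{(\ell)}$ yields $\Ker(\mathrm{incl}^*)=\delta_1^{*,\mathrm{top}}\bigl(\Ker(\delta_1^{*,\mathrm{bot}})\bigr)=\mathrm{Im}\bigl(\delta_1^{*,\mathrm{top}}\circ\mathrm{res}\psi^{*}\bigr)$.

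It remains to account for $\mathcal{P}_0^{*}$. Dualizing the splitting $H_1(\widetilde{S})\cong\Ker((\iota_0)_*)\oplus\Z$ underlying \eqref{lp}, in which the $\Z$-summand is generated by $[e_{x_0}]$ and $\Ker((\iota_0)_*)\cong H_2^Q(X)$, the map $\mathcal{P}_0^{*}$ is the injection onto $\{\phi\in H^1_{\rm gr}(\widetilde{S};A):\phi([e_{x_0}])=0\}$, with complementary summand the classes $\phi_a$ ($a\in A$) vanishing on $\Ker((\iota_0)_*)$ and with $\phi_a([e_{x_0}])=a$. Since $e_{x_0}$ generates $\widetilde{G}^{\mathrm{ab}}$, each $\phi_a$ equals $\iota_0^{*}\chi_a$ for the character $\chi_a$ of $\widetilde{G}$ with $\chi_a(e_{x_0})=a$, so $\mathrm{res}\psi^{*}(\phi_a)=\mathrm{res}_K(\chi_a)\in\Ker(\delta_1^{*,\mathrm{top}})$ by the square through $\iota_0^{*}$; hence $\delta_1^{*,\mathrm{top}}\circ\mathrm{res}\psi^{*}$ kills the complementary summand and $\mathrm{Im}(\Theta)=\mathrm{Im}(\delta_1^{*,\mathrm{top}}\circ\mathrm{res}\psi^{*})=\Ker(\mathrm{incl}^*)$, giving surjectivity. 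For injectivity, if $\Theta(\phi)=0$ then $\mathrm{res}\psi^{*}(\mathcal{P}_0^{*}\phi)\in\Ker(\delta_1^{*,\mathrm{top}})=\mathrm{Im}(\mathrm{res}_K)$, say $\mathrm{res}\psi^{*}(\mathcal{P}_0^{*}\phi)=\mathrm{res}_K(\chi)=\mathrm{res}\psi^{*}(\iota_0^{*}\chi)$, so $\mathcal{P}_0^{*}\phi-\iota_0^{*}\chi\in\Ker(\mathrm{res}\psi^{*})$, which is zero by exactness of the localized bottom row together with $H^1_{\rm gr}(S;A)_{(\ell)}=0$. Evaluating $\mathcal{P}_0^{*}\phi=\iota_0^{*}\chi$ on $[e_{x_0}]$ gives $0=\chi(\overline{e_{x_0}})$, so $\chi=0$ because $\widetilde{G}^{\mathrm{ab}}\cong\Z\langle\overline{e_{x_0}}\rangle$, whence $\mathcal{P}_0^{*}\phi=0$ and $\phi=0$ by injectivity of $\mathcal{P}_0^{*}$.

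I expect the main obstacle to be the bookkeeping around the $\Z$-summand generated by $e_{x_0}$: one must verify precisely that the character dual to $e_{x_0}$ extends from $\widetilde{S}$ to $\widetilde{G}=\As(X)$ and is annihilated by the transgression, so that inserting $\mathcal{P}_0^{*}$ alters neither the image nor the kernel of $\delta_1^{*,\mathrm{top}}\circ\mathrm{res}\psi^{*}$. The genuinely homological heart of the argument is the vanishing $H^2_{\rm gr}(\As(X);A)_{(\ell)}=0$, which rests on Theorem \ref{rthm2} together with the computation $\As(X)^{\mathrm{ab}}\cong\Z$; once this makes the top transgression surjective, the remaining assertions follow from exactness of localization and a chase through the two $5$-term sequences.
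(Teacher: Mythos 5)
Your proposal is correct and takes essentially the same approach as the paper: the paper's own three-sentence proof is precisely this chase through the localized diagram \eqref{33aad}, deducing from Theorem \ref{rthm2} that $(\delta_1^*)_{(\ell)}$ is surjective with kernel $\iota_0^*\bigl(H^1_{\rm gr}(\As(X);A)_{(\ell)}\bigr)$, and then using $H^1_{\rm gr}(\Stab(x_0);A)_{(\ell)}=0$ to collapse the target and kill $\Ker(\mathrm{res}\,\psi^*)$. You simply make explicit what the paper leaves implicit, namely the universal-coefficient step $H^2_{\rm gr}(\As(X);A)\cong\Hom\bigl(H_2^{\rm gr}(\As(X);\Z),A\bigr)$ via $\As(X)^{\mathrm{ab}}\cong\Z$ and the verification that inserting $\mathcal{P}_0^*$ changes neither kernel nor image because the complementary $\Z$-summand generated by $e_{x_0}$ extends to characters of $\As(X)$ annihilated by the transgression.
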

\begin{proof}
From Theorem \ref{rthm2} it follows that the localized map $(\delta_1^*)_{(\ell)}$ is
surjective and the kernel is equal to $\iota^*( H^1_{\rm gr}(\As(X);A)_{(\ell)}).$
Since $ H^1_{\rm gr} ( \mathrm{Stab}(x_0); A )_{(\ell)} =0$, the localized $ \mathcal{T}_{(\ell)} $ is surjective, and
the kernel is also equal to $\iota^* ( H^1_{\rm gr}(\As(X);A)_{(\ell)})$. Hence, bijectivity is established.
\end{proof}

\

\hspace*{5pt}A dual reconsideration of the above proof results in the following theorem. 
\begin{thm}\label{thm3} Let $X$ be a connected quandle. Using the same notation as in Theorem \ref{thm2}, we get the homomorphism:
\begin{equation}\label{bbb} \mathcal{T}_* :
\Ker \Bigl( H_2^{\rm gr} ( \mathrm{Inn}(X), \ \mathrm{Stab}(x_0) ;\mathbb{Z} ) \ra H_1^{\rm gr}( \mathrm{Stab}(x_0) ; \mathbb{Z} ) \Bigr) \lra
H_2^{Q} (X ; \mathbb{Z} ), \end{equation}
such that the map localized at $\ell$ yields the following isomorphism:
\begin{equation}\label{bbb4} \mathcal{T}_* :
H_2^{\rm gr} ( \mathrm{Inn}(X), \ \mathrm{Stab}(x_0) ;\mathbb{Z} )_{(\ell)} \stackrel{\sim }{\lra}
H_2^{Q} (X ; \mathbb{Z} )_{(\ell)}, \end{equation}
 provided that $\ell$ is a prime number, $\ell$ and $\mathrm{Type}(X)$ are relatively prime, and $ H_1^{\rm gr} ( \mathrm{Stab}(x_0) ; \mathbb{Z} )_{(\ell)}$ vanishes.
\end{thm}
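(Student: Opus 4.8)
The plan is to run the entire argument of Theorem \ref{thm2} with every arrow reversed, replacing cohomology by homology throughout. First I would assemble the homological counterpart of diagram \eqref{33aad}. Dualizing the mapping-cone definition of $C^*_{\rm gr}(\Inn(X),\mathrm{Stab}(x_0);\mathbb{Z})$ yields a relative group homology $H_*^{\rm gr}(\Inn(X),\mathrm{Stab}(x_0);\mathbb{Z})$ together with a long exact sequence whose relevant portion reads
$$ H_2^{\rm gr}(\mathrm{Stab}(x_0)) \xrightarrow{\mathrm{incl}_*} H_2^{\rm gr}(\Inn(X)) \xrightarrow{p_*} H_2^{\rm gr}(\Inn(X),\mathrm{Stab}(x_0)) \xrightarrow{\partial} H_1^{\rm gr}(\mathrm{Stab}(x_0)). $$
Alongside this I would record the two homological $5$-term exact sequences attached to the central extensions in \eqref{33aa}: for $\As(X)\to\Inn(X)$ the transgression $d_2\colon H_2^{\rm gr}(\Inn(X))\to\Ker(\psi)$ followed by $\nu\colon\Ker(\psi)\to H_1^{\rm gr}(\As(X))$, and for $\widetilde{\mathrm{Stab}}(x_0)\to\mathrm{Stab}(x_0)$ the transgression $d_2'\colon H_2^{\rm gr}(\mathrm{Stab}(x_0))\to\Ker(\psi)$ followed by $\mu\colon\Ker(\psi)\to H_1^{\rm gr}(\widetilde{\mathrm{Stab}}(x_0))$. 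Naturality of the $5$-term sequence with respect to the map of extensions induced by $\iota_0$ and $\mathrm{Stab}(x_0)\hookrightarrow\Inn(X)$ gives $d_2\circ\mathrm{incl}_*=d_2'$ and $(\iota_0)_*\circ\mu=\nu$, and I would identify the target $H_2^Q(X;\mathbb{Z})$ with $\Ker((\iota_0)_*)\subset H_1^{\rm gr}(\widetilde{\mathrm{Stab}}(x_0))$ via Theorem \ref{rthm1}.

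With this in place I would define $\mathcal{T}_*$ by the zigzag dual to Definition \ref{def4}. Given a class $\xi$ in $\Ker(\partial)=\mathrm{Im}(p_*)$, choose a lift $\eta\in H_2^{\rm gr}(\Inn(X))$ with $p_*(\eta)=\xi$ and set $\mathcal{T}_*(\xi):=\mu(d_2(\eta))$. Two routine exactness checks make this legitimate: independence of the lift holds because any two lifts differ by $\mathrm{incl}_*(\zeta)$ and $\mu(d_2(\mathrm{incl}_*\zeta))=\mu(d_2'(\zeta))=0$ by exactness of the stabilizer sequence; and the value lands in $\Ker((\iota_0)_*)$ because $(\iota_0)_*\mu(d_2\eta)=\nu(d_2\eta)=0$ by exactness of the $\As(X)$ sequence. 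This already produces the integral homomorphism \eqref{bbb}.

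For the isomorphism \eqref{bbb4} I would localize at $\ell$ and feed in the same two inputs used in Theorem \ref{thm2}. Theorem \ref{rthm2} gives $H_2^{\rm gr}(\As(X))_{(\ell)}=0$, so $d_2$ becomes injective; the hypothesis $H_1^{\rm gr}(\mathrm{Stab}(x_0);\mathbb{Z})_{(\ell)}=0$ forces $\partial_{(\ell)}=0$ (hence the source of $\mathcal{T}_*$ is all of $H_2^{\rm gr}(\Inn(X),\mathrm{Stab}(x_0))_{(\ell)}$ and $p_*$ is onto) and makes $\mu_{(\ell)}$ surjective. Surjectivity of $\mathcal{T}_{*,(\ell)}$ then follows by lifting $w\in\Ker((\iota_0)_*)_{(\ell)}$ through $\mu$ to some $k$ with $\nu(k)=(\iota_0)_*(w)=0$, so $k\in\mathrm{Im}(d_2)$ and $w=\mathcal{T}_*(p_*\eta)$; injectivity follows because $\mathcal{T}_*(\xi)=0$ forces $d_2(\eta)\in\Ker(\mu)=\mathrm{Im}(d_2')$, whence $d_2(\eta-\mathrm{incl}_*\zeta)=0$, and injectivity of $d_2$ together with $p_*\circ\mathrm{incl}_*=0$ gives $\xi=p_*\eta=0$.

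The main obstacle I anticipate is not the final chase, which is symmetric to the proof of Theorem \ref{thm2}, but the bookkeeping that precedes it: one must correctly dualize the mapping-cone definition of the relative cochain complex to obtain the relative homology long exact sequence, and then verify the naturality identities $d_2\circ\mathrm{incl}_*=d_2'$ and $(\iota_0)_*\circ\mu=\nu$ for the transgression and connecting maps across the vertical inclusion-induced maps. These naturality statements are precisely what force $\mathcal{T}_*$ to be well defined and to land in the correct subgroup, and if one instead wants $\mathcal{T}_*$ to be literally the universal-coefficients dual of $\oplus_i\mathcal{T}_i$, a little extra care is needed to match the two descriptions.
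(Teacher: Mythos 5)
Your proposal is correct and is precisely the argument the paper intends: the paper offers no separate proof of Theorem \ref{thm3} beyond stating that it follows from ``a dual reconsideration'' of the proof of Theorem \ref{thm2}, and your homological dualization --- the mapping-cone long exact sequence for $H_*^{\rm gr}(\mathrm{Inn}(X),\mathrm{Stab}(x_0);\mathbb{Z})$, the two homological $5$-term sequences with their naturality squares, the zigzag definition $\mathcal{T}_*(\xi)=\mu(d_2(\eta))$ on $\Ker(\partial)=\mathrm{Im}(p_*)$, and the localized chase combining Theorem \ref{rthm2} with the vanishing of $H_1^{\rm gr}(\mathrm{Stab}(x_0);\mathbb{Z})_{(\ell)}$ --- carries out exactly that dualization. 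You in fact record more detail than the paper does (the well-definedness checks and the naturality identities $d_2\circ\mathrm{incl}_*=d_2'$ and $(\iota_0)_*\circ\mu=\nu$), all of which is sound.
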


\

We now emphasize the advantages of the homomorphism $ \mathcal{T} $. 
While it is true that Theorem \ref{rthm1} gives a way to compute second quandle (co)homology, in general, it is hard to definitively determine $\As(X)$ and $\Ker (\psi)$.
In contrast, according to Theorems \ref{thm2} and \ref{thm3},
we do not need any information of $\As(X)$ and $\Ker (\psi)$. What we do need is $\Inn(X)$ and its second group homology.
Furthermore, the map $ \mathcal{T}_i $ can be concretely described as follows.


\

For this, by the definitions, we shall only examine $\delta_1^* $ and the isomorphisms in \eqref{33aab} in detail. For a general central extension $0 \ra K \ra G \ra \mathcal{G} \ra 0$, we give a concrete description of the delta map $\delta_1^* : H^1_{\rm gr} ( K;A) \ra H^2_{\rm gr} ( \mathcal{G};A) $.
For $\phi \in H^1_{\rm gr}(K,A)$, choose a representative 1-cocycle $F: K \ra A $ with $[F] = \phi$, and a section $\mathfrak{s}: \mathcal{G} \ra G$.
We define a map
$$\delta( F ): \mathcal{G}^2 \lra A \ \ \mathrm{ by} \ \ \ \ \delta( F ) (g,h):= F(\mathfrak{s} (g) \mathfrak{s}(h)\mathfrak{s}(gh)^{-1})$$
for any $g,h \in \mathcal{G}$.
Then, $\delta^*_1(\phi )$ is equal to $[\delta( F) ] $ (see \cite{Rou} for a proof).

\

We now explain the isomorphism in \eqref{33aab}.
For $ i \in I$ and any element $ [g] \in \widetilde{\rm Stab}(x_i)$, we choose a representative $g = e_{x_1}^{\epsilon_1} \cdots e_{x_n}^{\epsilon_n}$ for some $x_1, \dots, x_n \in X$ and $\epsilon_1,\dots, \epsilon_n \in \{ \pm 1\} $. Now, consider the correspondence
$$\widetilde{\rm Stab}(x_i) \ni e_{x_1}^{\epsilon_1} \cdots e_{x_n}^{\epsilon_n} \longmapsto \sum_{j=1}^n \epsilon_i \Bigl(\bigl( ( \cdots ( x_i * ^{\epsilon_1} x_1 ) * ^{\epsilon_2} \cdots ) * ^{\epsilon_{j-1}} x_{j-1}\bigr) *^{\frac{\epsilon_j -1 }{2}} x_j , \ x_{j} \Bigr)\in Z_2^R(X). $$
Since this correspondence is additive, we have an induced map $ H_1^{\rm gr}(\widetilde{\rm Stab}(x_0) ;\mathbb{Z} )
\ra H_2^R(X)$. This map is exactly equal to the isomorphism in \eqref{33aab} (see also \cite[\S 5.2]{Nosbook} for the proof).

\

When $X$ is of finite order, there are methods to determine the finite group $\mathrm{Inn}(X) $ (see for example, \cite{EMR} and \cite[Appendix B]{Nosbook}) and it is not so hard to choose a section $\mathrm{Inn}(X) \ra \As(X)$.
In summary, if we find an explicit representative of a quandle 2-cocycle $\phi: X^2 \ra A$, we can compute $\mathcal{T}(\phi). $

\section{The second homology of Alexander quandles}\label{section4}
Based on the computation of $ \mathcal{T}$ in the previous section,
we will concretely describe $\mathcal{T}$ where $X$ is an Alexander quandle of type $\mathrm{Type}(X)$.
That is, $X$ is a $\mathbb{Z}[t^{\pm 1}]$-module, and the quandle operation is defined by $x* y= tx+(1-t)y$. Recall that $\mathrm{Type}(X)$ is equal to ${\mathrm{min}\{ n \in \mathbb{N} \ | \ t^n= \mathrm{id}_X \ \}}$ if it exists or $\infty $ otherwise, 
and $X$ is connected if and only if $1 -t :X \ra X$ is surjective.

We now analyze $\As(X)$, when $ (1-t)$ is invertible.
Consider the homomorphism $X \otimes_{\mathbb{Z}} X \ra X \otimes_{\mathbb{Z}} X $ which takes $x\otimes y $ to $x \otimes y - y \otimes t x$, and let $Q_X$ be its cokernel. 
Recall that Clauwens \cite{Cla2} showed a group isomorphism 
$$ \As(X) \lra \mathbb{Z} \times X \times Q_X ,$$
where the group operation on the right hand side is defined by
\begin{equation}\label{abc3}(n,a,\alpha ) \cdot (m, b, \beta)=(n+m, t^{m} a+b, [ a \otimes t^{m}b] + \alpha + \beta ). \end{equation}
\subsection{Second quandle homology from Schur multipliers}\label{SQS}
In Section \ref{SS5}, we give a self-contained proof of the following theorem. 
\begin{thm}\label{prob1}
Let $X$ be an Alexander quandle with $(1-t)$ invertible. 
The correspondence $\mathbb{Z} \langle X^2 \rangle \ni (x,y) \mapsto x \otimes (1-t)y \in X \otimes X $ gives rise to the isomorphism 
\begin{equation}\label{abc2} H_2^Q(X;\mathbb{Z}) \cong \frac{X\otimes_{\mathbb{Z}} X}{ \{ x \otimes y - y \otimes t x \}_{x,y \in X }}.\end{equation}

\end{thm}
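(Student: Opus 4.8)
The plan is to compute $H_2^Q(X;\mathbb{Z})$ for an Alexander quandle $X$ with $(1-t)$ invertible by combining the Eisermann decomposition (Theorem \ref{rthm1}) with Clauwens' explicit description of $\As(X)$. Since $(1-t)$ is surjective, $X$ is connected, so there is a single orbit and the decomposition collapses to $H_2^Q(X;\mathbb{Z}) \cong \Ker((\iota_0)_*)$ for a fixed basepoint $x_0 \in X$; by translating we may take $x_0 = 0$. The strategy is therefore to understand the stabilizer $\widetilde{\mathrm{Stab}}(0) \subset \As(X)$ concretely inside the group $\mathbb{Z} \times X \times Q_X$ of \eqref{abc3}, compute its abelianization $H_1^{\rm gr}(\widetilde{\mathrm{Stab}}(0);\mathbb{Z})$, and identify the kernel of the map induced by the inclusion $\iota_0$ into $H_1^{\rm gr}(\As(X);\mathbb{Z}) \cong \mathbb{Z}$ (the first homology being detected by the augmentation $\epsilon_X$).

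\medskip

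First I would determine $\widetilde{\mathrm{Stab}}(0)$ under the isomorphism $\As(X) \cong \mathbb{Z} \times X \times Q_X$. The action is $e_y(x) = x * y = tx + (1-t)y$; reading off how a general element $(n,a,\alpha)$ acts on $0$, the stabilizer condition forces a relation between $n$ and $a$ (roughly, $a$ is determined by $n$ via the geometric-series expression $\sum_{i} t^i$ applied to $0$, which since $x_0 = 0$ pins $a$ down up to the $(1-t)$-torsion that invertibility kills). Thus $\widetilde{\mathrm{Stab}}(0)$ should be isomorphic to $\mathbb{Z} \times Q_X$ as a set, with the group law inherited from \eqref{abc3}. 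The key computation is then the commutator structure: from \eqref{abc3} the commutator of $(n,a,\alpha)$ and $(m,b,\beta)$ lands purely in the $Q_X$ factor and equals $[a \otimes t^m b] - [b \otimes t^n a]$ (up to sign), so abelianizing kills precisely these elements. The upshot is that $H_1^{\rm gr}(\widetilde{\mathrm{Stab}}(0);\mathbb{Z})$ is $\mathbb{Z} \oplus (Q_X \big/ \text{commutator relations})$, and the $\mathbb{Z}$ summand is exactly $\Im((\iota_0)_*)$, detected by $\epsilon_X$.

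\medskip

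Next I would identify the surviving quotient of $Q_X$ with the right-hand side of \eqref{abc2}. Recall $Q_X$ is the cokernel of $x \otimes y \mapsto x \otimes y - y \otimes tx$ on $X \otimes_{\mathbb{Z}} X$, i.e.\ $Q_X = (X \otimes X)/\{x \otimes y - y \otimes tx\}$, which is already the target group in \eqref{abc2}. The point is that the extra commutator relations picked up in abelianizing $\widetilde{\mathrm{Stab}}(0)$ should turn out to be consequences of the defining relations of $Q_X$ (this is where invertibility of $(1-t)$ does the real work, allowing one to absorb the $t^m$, $t^n$ twists). Then $\Ker((\iota_0)_*)$ is the complement of the $\mathbb{Z}$-summand, namely $Q_X$ itself, giving $H_2^Q(X;\mathbb{Z}) \cong Q_X$. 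Finally I would verify that the isomorphism matches the stated correspondence $(x,y) \mapsto x \otimes (1-t)y$ by tracing the explicit cycle-to-group map recorded in the paragraph after \eqref{33aab}: the $2$-cycle $(x,y) \in Z_2^R(X)$ corresponds to an element $e_x e_y e_{x*y}^{-1} \in \widetilde{\mathrm{Stab}}$-type expressions, and multiplying out in \eqref{abc3} produces exactly the class $[x \otimes (1-t)y]$ in $Q_X$, after using $x*y = tx + (1-t)y$.

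\medskip

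The main obstacle I anticipate is the bookkeeping in the last two steps: correctly locating $\widetilde{\mathrm{Stab}}(0)$ inside Clauwens' group, and then showing that the \emph{commutator} relations arising from abelianization coincide with the \emph{defining} relations of $Q_X$ rather than giving a strictly larger quotient. The $t$-twists in the group law \eqref{abc3} mean the two sets of relations are not literally identical term-by-term, so invertibility of $(1-t)$ (equivalently, that $t^{\mathrm{Type}(X)} = \mathrm{id}$ with the relevant operator invertible) must be invoked to reconcile them. Matching the concrete cocycle correspondence so that the abstract isomorphism is realized by the clean formula $(x,y) \mapsto x \otimes (1-t)y$ is the final check, and getting the factor $(1-t)$ to emerge naturally — rather than as an ad hoc normalization — is the delicate point that makes the statement tight. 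Since the paper promises a \emph{self-contained algebraic} proof in Section \ref{SS5}, I expect the actual argument to bypass some of the homological machinery above and instead manipulate the explicit presentation of $\As(X)$ directly, which is likely cleaner but demands careful tracking of these twisted relations.
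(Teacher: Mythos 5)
Your proposal is correct in outline, but it takes a genuinely different route from the paper's proof --- in fact it is precisely the derivation that the paper sketches and deliberately sets aside in Remark \ref{kk}: combine Eisermann's isomorphism $H_2^Q(X;\mathbb{Z})\cong \Ker((\iota_0)_*)$ from Theorem \ref{rthm1} with Clauwens' presentation \eqref{abc3} of $\As(X)$, and identify $\Ker((\iota_0)_*)$ with $Q_X$. Two corrections to your execution, both in your favor. First, the computation is simpler than you anticipate: taking $x_0=0$, an element $(n,a,\alpha)$ acts on $X$ by $z\mapsto t^n z+a$, so $\widetilde{\mathrm{Stab}}(0)=\{(n,0,\alpha)\}$ exactly (no geometric-series subtlety), and by \eqref{abc3} this subgroup is \emph{already abelian}, since the twisted term $[a\otimes t^m b]$ vanishes when $a=b=0$; hence $H_1^{\rm gr}(\widetilde{\mathrm{Stab}}(0);\mathbb{Z})\cong\mathbb{Z}\times Q_X$ on the nose, and the difficulty you flag --- reconciling ``commutator relations'' with the defining relations of $Q_X$ --- evaporates because no extra relations arise. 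Second, your claim that a general commutator in $\As(X)$ lands purely in the $Q_X$-factor is false (compare the $X$-components $t^m a+b$ and $t^n b+a$ of the two products), but this is harmless since your argument only needs commutators of stabilizer elements. Since $X$ is connected, $H_1^{\rm gr}(\As(X);\mathbb{Z})\cong\mathbb{Z}$ detected by $\epsilon_X$, which reads off the first coordinate, so $\Ker((\iota_0)_*)=\{0\}\times Q_X$, giving \eqref{abc2}. The bookkeeping you flag at the end is real: to obtain the specific chain-level formula $(x,y)\mapsto x\otimes(1-t)y$ you must pin down the $Q_X$-coordinate of $e_x$ under Clauwens' isomorphism (the paper records only the abstract group law \eqref{abc3}) and push it through the explicit cycle correspondence following \eqref{33aab}. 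As for what each approach buys: your route is conceptually short but imports two black boxes --- Clauwens' theorem \cite{Cla2} and Theorem \ref{rthm1}, whose proof rests on the topology of rack spaces --- which is exactly what the paper's Section \ref{SS5} proof is designed to avoid; that proof works entirely inside the quandle chain complex, reducing $H_2^Q(X;\mathbb{Z})$ to a presentation via Propositions \ref{lem5} and \ref{lem6}, rewriting $(a,b)$ as $[a,(1-t)b]$, and extracting bilinearity and the relation $[a,b]=[b,ta]$ from the boundary relation \eqref{r1} by a chain of substitutions, which also yields the universal cocycle formula of Corollary \ref{cor8} for free. So your closing guess is slightly off: the self-contained proof does not manipulate the presentation of $\As(X)$ at all, but rather the presentation of the homology itself.
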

\begin{rem}\label{kk} The existence of the isomorphism in \eqref{abc2} is implicit in \cite{Cla2,IV}. In fact, since we can easily check that $ \Ker (\iota_i) \cong Q_X$ from the group operation in \eqref{abc3}, 
then the isomorphism $H_2^Q(X) \cong \Ker (\iota_i) $ in Theorem \ref{rthm1} readily implies the isomorphism in \eqref{abc2}.
However, Theorem \ref{rthm1} uses topological properties of the rack space \cite{FRS1,FRS3}, where the rack space is a geometrical realization of the rack complex. In contrast, Section \ref{SS5} will give an independent algebraic proof of Theorem \ref{prob1} and a concrete description of the isomorphism.
\end{rem}

To describe the homomorphism $\mathcal{T}$,
we consider the following transformation:
\begin{cor}\label{prob7}
Let $X$ be an Alexander quandle with $(1-t)$ invertible.
Then, there is an isomorphism
\begin{equation}\label{abc} H_2^Q(X;\mathbb{Z}) \cong \frac{X\wedge_{\mathbb{Z}} X}{ \{ x \wedge y - tx \wedge ty \}_{x,y \in X }} . \end{equation}
\end{cor}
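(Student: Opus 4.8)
The plan is to derive Corollary~\ref{prob7} directly from Theorem~\ref{prob1} by comparing the two quotients of tensor/exterior powers. Theorem~\ref{prob1} gives
$$ H_2^Q(X;\mathbb{Z}) \cong \frac{X\otimes_{\mathbb{Z}} X}{\{ x \otimes y - y \otimes tx \}_{x,y\in X}}, $$
so it suffices to produce an isomorphism between this group and the right-hand side of \eqref{abc}. First I would observe that the defining relations $x\otimes y - y\otimes tx$ already force a relationship between the symmetric and antisymmetric parts of $X\otimes X$. Setting $y=x$ in the relation gives $x\otimes x = x\otimes tx$ in the quotient, and more usefully, adding the relation for $(x,y)$ to that for $(y,x)$ yields $x\otimes y + y\otimes x \equiv y\otimes tx + x\otimes ty$. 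The strategy is to show that modulo the Clauwens relations the symmetric tensors are determined by (indeed collapse onto) the antisymmetric ones, so that the quotient of $X\otimes X$ is isomorphic to the corresponding quotient of $X\wedge X=X\otimes X/\langle x\otimes x\rangle$.

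The key steps, in order, would be as follows. Let $R\subset X\otimes X$ denote the subgroup generated by $\{x\otimes y - y\otimes tx\}$, and let $R'\subset X\wedge X$ denote the subgroup generated by $\{x\wedge y - tx\wedge ty\}$. I would first check that the natural projection $\pi: X\otimes X \to X\wedge X$ sends $R$ into $R'$: indeed $\pi(x\otimes y - y\otimes tx) = x\wedge y - y\wedge tx = x\wedge y + tx\wedge y$, and since $(1-t)$ is invertible one can rewrite the generators of $R'$ in terms of these images, using that $x\wedge y - tx\wedge ty = (x\wedge y + tx \wedge (-y))$ and manipulating with the substitution $y\mapsto (1-t)^{-1}(\,\cdot\,)$. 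This shows $\pi$ descends to a well-defined surjection $\bar\pi: (X\otimes X)/R \twoheadrightarrow (X\wedge X)/R'$. Second, to get injectivity I would show that in $(X\otimes X)/R$ every symmetric tensor $x\otimes x$ lies in the image of the relation subgroup, i.e. that the kernel of $\pi$, generated by the $\{x\otimes x\}$, already maps into $R$. Setting $y=x$ in the Clauwens relation gives $x\otimes x - x\otimes tx = x\otimes(1-t)x \in R$; since $(1-t)$ is surjective, every $x\otimes z$ with $z$ in the image of $(1-t)$—that is, every $x\otimes z$—shows the diagonal tensors are absorbed, so $\ker\pi \subseteq R$ modulo a short computation. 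Combining the two inclusions, $R = \pi^{-1}(R')$, whence $\bar\pi$ is an isomorphism.

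The main obstacle I anticipate is the injectivity direction, specifically verifying cleanly that $\ker\pi\subseteq R$ rather than merely $\ker\pi \subseteq R + (\text{something})$. The generators $x\otimes x$ of $\ker\pi$ must be shown to lie in $R$ using only the invertibility of $(1-t)$, and the natural identity $x\otimes(1-t)x \in R$ only directly controls tensors of the form $x\otimes(1-t)x$, not all of $x\otimes x$; one must either use bilinearity together with surjectivity of $(1-t)$ to reach arbitrary $x\otimes x$, or argue via a dimension/cardinality count that the two quotients have the same size and deduce the iso from surjectivity alone. I expect the former to require care because $X\otimes X$ need not be finitely generated and the relations are $\mathbb{Z}$-bilinear, so I would set up the argument as an equality of subgroups $R = \pi^{-1}(R')$ and verify both inclusions generator-by-generator, invoking $(1-t)^{-1}$ to invert the substitutions. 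Once $R=\pi^{-1}(R')$ is established, the isomorphism \eqref{abc} follows formally from the third isomorphism theorem.
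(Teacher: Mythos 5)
There is a genuine gap, and it occurs at the very first step: the \emph{untwisted} projection $\pi : X\otimes X \to X\wedge X$ does not carry the Clauwens relations into the relations of \eqref{abc}, so your map $\bar\pi$ is not defined. Computing directly, $\pi(x\otimes y - y\otimes tx) = x\wedge y + tx\wedge y = (1+t)x\wedge y$, and such elements need not lie in the subgroup $R'$ generated by $\{x\wedge y - tx\wedge ty\}$. A concrete counterexample: let $X=(\mathbb{Z}/5)^2$ with $t=\mathrm{diag}(2,3)$, so $\det t = 1$ and $1-t=\mathrm{diag}(-1,-2)$ is invertible. Here $tx\wedge ty = \det(t)\,x\wedge y = x\wedge y$, hence $R'=0$ and the right-hand side of \eqref{abc} is all of $X\wedge X\cong \mathbb{Z}/5$; yet $\pi(e_1\otimes e_2 - e_2\otimes te_1) = e_1\wedge e_2 - 2\,e_2\wedge e_1 = 3\,e_1\wedge e_2 \neq 0$. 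The identity you invoke to bridge the two relation sets, $x\wedge y - tx\wedge ty = x\wedge y + tx\wedge(-y)$, is simply false (it conflates $ty$ with $y$). Your second step fails in the same example: $\ker\pi = \langle x\otimes x\rangle$ is \emph{not} contained in $R$, since $(e_1+e_2)\otimes(e_1+e_2) \equiv 4\,e_1\otimes e_2 \neq 0$ in $(X\otimes X)/R$. The relation $r(u,u)=u\otimes(1-t)u$ only yields $(1-t)^{-1}x\otimes x \in R$, never $x\otimes x$ itself; indeed, under the true isomorphism the class of $x\otimes x$ corresponds to $x\wedge tx$, which is generically nonzero. So $R\neq \pi^{-1}(R')$, and no appeal to the third isomorphism theorem can rescue the untwisted comparison. (Your approach would accidentally succeed for $t=-1$, the Takasaki case of \cite{NP}, where $(1+t)x\wedge y=0$ and $R'=0$; that special case may be the source of the intuition, but it does not generalize.)

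The missing idea is precisely the $(1-t)$-twist, which is what the paper's proof uses: the isomorphism is induced not by $\pi$ but by twisting one tensor factor by $1-t$ before projecting to the wedge. With the twist on the first factor, the relations match \emph{on the nose}:
\begin{equation*}
(1-t)x\wedge y \;-\; (1-t)y\wedge tx \;=\; \bigl(x\wedge y - tx\wedge y\bigr) + \bigl(tx\wedge y - tx\wedge ty\bigr) \;=\; x\wedge y - tx\wedge ty ,
\end{equation*}
so the generator $x\otimes y - y\otimes tx$ of $R$ maps exactly to a generator of $R'$, and the inverse homomorphism is induced by twisting with $(1-t)^{-1}$ (well-defined on the wedge because $(1-t)^{-1}x\otimes x = r\bigl((1-t)^{-1}x,(1-t)^{-1}x\bigr)\in R$, and compatible with $R'$ because $u\otimes y \equiv y\otimes tu \equiv tu\otimes ty$ modulo $R$). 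This is a two-line verification once the correct map is written down, but the twist is essential and cannot be omitted; in particular, the heavy machinery you anticipated for the injectivity step is unnecessary, while the step you regarded as routine (well-definedness of $\bar\pi$) is where the argument breaks.
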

\begin{proof} We can easily check that the correspondence $ X \otimes X \ra X \otimes X$ which sends  $x \otimes y$ to $x \otimes (1-t)y,$ defines a homomorphism from the right hand side of \eqref{abc2} to that of \eqref{abc}.
Moreover, the inverse mapping is obtained from the correspondence $ X \otimes X \ra X \otimes X$ which sends $x \otimes y$ to $x \otimes (1-t)^{-1}y$. 
\end{proof}
\begin{rem}\label{NPrem} If $t=-1$, the isomorphism in \eqref{abc} has been proved in \cite{NP}. Therefore, Corollary \ref{prob7} is a generalization of \cite{NP}.
\end{rem}


Next, we explicitly describe the homomorphism $ \mathcal{T} $.
It is not hard to check that $\Inn(X) \cong (\mathbb{Z}/ \mathrm{Type}(X)) \ltimes X$ (see for example, \cite[Proposition B.18]{Nosbook}).
In order to describe $\delta^*_1 $ in detail, we choose to define a section $\mathfrak{s}: \Inn(X) \ra \As(X) $ by $ ([n],x) \mapsto (n,x,0)$. 
The universal quandle 2-cocycle is represented by (see Corollary \ref{cor8})
\begin{equation}\label{abc4} \phi: X \times X \lra Q_X ; \ \ \ \ \ \ (x,y) \longmapsto [x\otimes (1-t)y ]. \end{equation}
Then, according to the discussion in \S \ref{S3}, we can easily show that the 2-cocycle of the group $\mathrm{Inn}(X)$ is given by the map:
\begin{equation}\label{abc5} \mathcal{T} (\phi):\mathrm{Inn}(X)^2 \lra \frac{X\wedge_{\mathbb{Z}} X}{ \{ x \wedge y - tx \wedge ty \}_{x,y \in X }} ; \ \ \ \ \ \ ((n,x),(m,y)) \longmapsto x\wedge y .\end{equation}

Finally, we will compute the domain of $\mathcal{T}_*$, i.e., the left hand side of \eqref{bbb}, and check Theorem \ref{thm3} when $X$ is an Alexander quandle and $(1-t)$ is invertible.
Since $\Inn(X) \cong (\mathbb{Z}/ \mathrm{Type}(X)) \ltimes X$ as above, $ \mathrm{Stab}(x_0) \cong \mathbb{Z}/ \mathrm{Type}(X) \mathbb{Z} $. Therefore, $H_k(\mathrm{Stab}(x_0) ;\mathbb{Z})$ is annihilated by $ \mathrm{Type}(X) $.
Hence, the left hand side is isomorphic to $ H_2^{\rm gr} ( \mathrm{Inn}(X);\mathbb{Z})$ after localization at $\ell $.
Then, by using transfer (see for example, \cite[Sect. III.8]{Bro}), we have the following isomorphisms:
$$H_2^{\rm gr} ( \mathrm{Inn}(X);\mathbb{Z})_{(\ell)} \cong \Bigl( \frac{H_2^{\rm gr} ( X;\mathbb{Z} ) }{ \{ t a -a \}_{ a \in H_2^{\rm gr} ( X;\mathbb{Z} ) }} \Bigr)_{(\ell )} \cong \Bigl( \frac{X\wedge_{\mathbb{Z}} X}{ \{ x \wedge y - tx \wedge ty \}_{x,y \in X }} \Bigr)_{(\ell )} . $$
Here, the second isomorphism is obtained from $ H_2^{\rm gr} ( X;\mathbb{Z} ) \cong X\wedge_{\mathbb{Z}} X$ and using the fact that the action of $\mathbb{Z}/ \mathrm{Type}(X) $ on $H_2^{\rm gr}(X;\mathbb{Z})$ is compatible with the diagonal action on $X\wedge_{\mathbb{Z}} X$ (see \cite[Chapter V.6]{Bro}).
In particular, after comparing \eqref{abc4} with \eqref{abc5}, we see that the localized $\mathcal{T}_{(\ell)}$ is an isomorphism as in Theorem \ref{thm3}.



\subsection{Examples of computations of the second homology of Alexander quandles}\label{SS4}

In this subsection, we compute the second homology of certain Alexander quandles using Corollary \ref{prob7}. 
The homomorphism $T : X\wedge_{\mathbb{Z}} X \ra X\wedge_{\mathbb{Z}} X $ which sends $x \wedge y$ to $ t x \wedge t y $ plays a key role.


First, we compute the second homology of a connected quandle of order $p^2$.
Although the result is first proven by \cite{IV}, we give a simpler proof.
To describe this, we say that a connected Alexander quandle $Y$ is {\it special}, if
$Y \cong (\mathbb{Z}/p)^2 $ and the determinant of $t: Y \ra Y $ is 1.
\begin{prop}[{\cite[Proposition 5.9]{IV}}]\label{prob9}
Let $X$ be a connected quandle of order $p^2$.
If $X$ is a special Alexander quandle, then $ H_2^Q(X) \cong \mathbb{Z}/p$.
Otherwise, $ H_2^Q(X)$ is zero.
\end{prop}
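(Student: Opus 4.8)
The plan is to apply Corollary \ref{prob7}, which reduces the computation of $H_2^Q(X)$ for any connected Alexander quandle with $(1-t)$ invertible to understanding the quotient
\begin{equation*}
\frac{X\wedge_{\mathbb{Z}} X}{ \{ x \wedge y - tx \wedge ty \}_{x,y \in X }} = \Coker\bigl( T - \mathrm{id} \bigr),
\end{equation*}
where $T: X\wedge_{\mathbb{Z}} X \ra X\wedge_{\mathbb{Z}} X$ sends $x \wedge y$ to $tx \wedge ty$, exactly the map highlighted at the start of Subsection \ref{SS4}. Since $X$ is connected of order $p^2$, we have $X \cong (\mathbb{Z}/p)^2$ as an abelian group (a connected Alexander quandle of order $p^2$ cannot be $\mathbb{Z}/p^2$, because there $1-t$ fails to be surjective for any automorphism $t$; one checks $t$ must be a unit $\equiv 1 \bmod p$, forcing $1-t$ non-invertible). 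With $X \cong (\mathbb{Z}/p)^2$, the exterior square $X\wedge_{\mathbb{Z}} X$ is the one-dimensional $\mathbb{F}_p$-space spanned by $e_1 \wedge e_2$, so the whole problem collapses to computing a single scalar.

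The key computation is that $T$ acts on the line $X\wedge_{\mathbb{Z}} X \cong \mathbb{F}_p$ by multiplication by $\det(t)$: indeed $T(e_1\wedge e_2) = te_1 \wedge te_2 = \det(t)\,(e_1\wedge e_2)$, which is the standard fact that an endomorphism acts on the top exterior power by its determinant. Therefore $T - \mathrm{id}$ is multiplication by $\det(t) - 1$ on $\mathbb{F}_p$, and its cokernel is
\begin{equation*}
H_2^Q(X) \cong \frac{\mathbb{F}_p}{(\det(t)-1)\,\mathbb{F}_p}.
\end{equation*}
I would then split into the two cases. If $\det(t) = 1$ in $\mathbb{F}_p$, i.e. $X$ is \emph{special}, then $T - \mathrm{id} = 0$ and the cokernel is all of $\mathbb{F}_p \cong \mathbb{Z}/p$. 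If $\det(t) \neq 1$ in $\mathbb{F}_p$, then $\det(t) - 1$ is a nonzero element of the field $\mathbb{F}_p$, hence invertible, so $(\det(t)-1)\mathbb{F}_p = \mathbb{F}_p$ and the cokernel vanishes. This gives precisely the dichotomy in the statement.

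The only genuine point requiring care — the main obstacle — is the structural input before the determinant computation: verifying that connectedness of $X$ (equivalently, surjectivity and hence invertibility of $1-t$, as recalled at the start of Section \ref{section4}) forces the underlying group to be $(\mathbb{Z}/p)^2$ rather than $\mathbb{Z}/p^2$, and that $t$ is then an element of $GL_2(\mathbb{F}_p)$ with no eigenvalue equal to $1$ (so that $1-t$ is invertible over $\mathbb{F}_p$) to which Corollary \ref{prob7} legitimately applies. Once the module structure is pinned down as a two-dimensional $\mathbb{F}_p$-vector space with $t \in GL_2(\mathbb{F}_p)$, the exterior-square argument is immediate and the determinant condition in the definition of \emph{special} matches $\det(t) \equiv 1 \pmod p$ on the nose. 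I would organize the write-up so that this classification step is dispatched first, after which the homology computation is a one-line consequence of Corollary \ref{prob7}.
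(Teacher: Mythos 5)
Your overall strategy --- reduce to $\Coker(1-T)$ via Corollary \ref{prob7} and compute $T$ as multiplication by $\det(t)$ on the exterior square --- is exactly the paper's, but the structural step you flag as ``the main obstacle'' contains a genuine error. You claim that $\mathbb{Z}/p^2\mathbb{Z}$ admits no connected Alexander quandle structure because every automorphism $t$ is ``a unit $\equiv 1 \bmod p$.'' This is false for every odd $p$: the automorphisms of $\mathbb{Z}/p^2\mathbb{Z}$ are multiplication by \emph{arbitrary} units $u \in (\mathbb{Z}/p^2\mathbb{Z})^{\times}$, and the reduction map $(\mathbb{Z}/p^2\mathbb{Z})^{\times} \ra (\mathbb{Z}/p\mathbb{Z})^{\times}$ is surjective, so one may choose $u \not\equiv 1 \pmod p$, making $1-u$ a unit mod $p^2$ and hence $1-t$ invertible. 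Concretely, $X = \mathbb{Z}/9\mathbb{Z}$ with $t = 2$ has $1 - t = -1$ invertible, so it is a connected (indeed quasigroup) Alexander quandle of order $9$ on a cyclic group; your claimed exclusion only holds for $p=2$, where all units mod $4$ are odd. The correct way to dispatch this case --- and what the paper does --- is not to exclude it but to observe that for cyclic $X$ one has $X \wedge_{\mathbb{Z}} X = 0$, so Corollary \ref{prob7} (or Theorem \ref{prob1}) gives $H_2^Q(X) = 0$ immediately; since such a quandle is not special by definition (the underlying group is not $(\mathbb{Z}/p)^2$), the dichotomy in the statement is unharmed. The fix is one line, but as written your argument proves the wrong classification statement.

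A second, smaller gap: the hypothesis is only that $X$ is a \emph{connected quandle} of order $p^2$, not that it is Alexander, yet you invoke Corollary \ref{prob7} (which applies to Alexander quandles with $1-t$ invertible) without justification. The paper's proof opens by citing the classification result that every connected quandle of order $p^2$ is isomorphic to an Alexander quandle (quoted there from \cite{EG}; cf.\ \cite{Gra}), and this input cannot be skipped. Once these two points are patched, the rest of your write-up --- $X \wedge_{\mathbb{Z}} X \cong \mathbb{F}_p$ for $X \cong (\mathbb{Z}/p)^2$, the identity $T = \det(t)\cdot \mathrm{id}$ on the top exterior power, and the resulting cokernel computation splitting on whether $\det(t) = 1$ --- coincides with the paper's proof verbatim.
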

\begin{proof} It is shown in \cite{EG} that $X$ is isomorphic to an Alexander quandle. 
Notice that, $X$ is either $\mathbb{Z}/p^2 \mathbb{Z}$ or $ (\mathbb{Z}/p\mathbb{Z} )^2 $.
If $ X = \mathbb{Z}/p^2 \mathbb{Z}$, then $X \wedge X=0$; hence, $H_2^Q(X)$ is zero.

Thus, we may suppose $ X \cong (\mathbb{Z}/p\mathbb{Z} )^2.$ Since $X \wedge X\cong \mathbb{Z}/p$, $H_2^Q(X)$ is zero or $\mathbb{Z}/p $.
Therefore, it is enough to show that $H_2^Q(X) \cong \mathbb{Z}/p $ if and only if $X$ is special.
Recall the homomorphism $T : X\wedge_{\mathbb{Z}} X \ra X\wedge_{\mathbb{Z}} X $, and that
$H_2^Q(X) \cong \mathrm{Coker} (1- T)$. 
We can easily get $T = \mathrm{det}(t) \cdot \mathrm{id}_{X \wedge X}$ by considering the eigenvalues of $t$.
Hence,
$H_2^Q(X) \cong \mathrm{Coker} (1- T)$ is $ \mathbb{Z}/p$, if and only if $ \mathrm{det}(t)=1 $, that is, $X$ is special.
\end{proof}

As another example, we will compute the second homology of the Alexander quandle constructed using the polynomial $\Phi_n= (t^n -1 )/(t-1)$.
\begin{prop}\label{prob5}
Let $X$ be an Alexander quandle of the form $X= \mathbb{F}_p [t]/( \Phi_n)$ over $\mathbb{F}_p$, where $\mathbb{F}_p$ denotes the field of order $p.$
Assume that $ n$ and $p$ are coprime.
Then, there is an isomorphism
\begin{equation}\label{abc34} H_2^Q(X;\mathbb{Z}) \cong (\mathbb{Z}/p)^{ \lfloor \frac{n-1}{2}\rfloor } .\end{equation}
\end{prop}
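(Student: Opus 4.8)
The plan is to apply Corollary~\ref{prob7}, which reduces the computation of $H_2^Q(X;\mathbb{Z})$ to understanding the cokernel of the map $1-T$ on $X \wedge_{\mathbb{Z}} X$, where $T(x \wedge y) = tx \wedge ty$. So the first step is to identify $X \wedge_{\mathbb{Z}} X$ as an $\mathbb{F}_p[t]/(\Phi_n)$-module (or at least as an $\mathbb{F}_p$-vector space equipped with the action of $T$) and then compute $\mathrm{Coker}(1-T)$. Since $X = \mathbb{F}_p[t]/(\Phi_n)$ has dimension $n-1$ over $\mathbb{F}_p$ (because $\deg \Phi_n = n-1$), the exterior square $X \wedge_{\mathbb{Z}} X = X \wedge_{\mathbb{F}_p} X$ has dimension $\binom{n-1}{2}$, so we are computing the cokernel of an endomorphism of an $\mathbb{F}_p$-vector space of that dimension.

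The key simplification comes from the hypothesis $\gcd(n,p)=1$: this guarantees that $\Phi_n = (t^n-1)/(t-1)$ is separable over $\mathbb{F}_p$, so $t^n = 1$ on $X$ and $t$ acts semisimply. The plan is to pass to the algebraic closure $\overline{\mathbb{F}_p}$, where $X \otimes \overline{\mathbb{F}_p}$ splits into one-dimensional eigenspaces for the distinct eigenvalues of $t$; these eigenvalues are exactly the primitive and non-primitive $n$-th roots of unity other than $1$, i.e. the roots $\zeta$ of $\Phi_n$, which are the $n$-th roots of unity $\zeta \neq 1$. If $\{v_\zeta\}$ is an eigenbasis with $t v_\zeta = \zeta v_\zeta$, then $v_\zeta \wedge v_{\zeta'}$ is an eigenvector of $T$ with eigenvalue $\zeta \zeta'$. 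Hence $\mathrm{Coker}(1-T) \otimes \overline{\mathbb{F}_p}$ is supported precisely on those basis vectors $v_\zeta \wedge v_{\zeta'}$ (with $\zeta \neq \zeta'$ and the pair unordered) for which $\zeta \zeta' = 1$, i.e. $\zeta' = \zeta^{-1}$.

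So the core of the argument is a counting problem: among the $n-1$ roots of unity $\zeta \neq 1$, count the unordered pairs $\{\zeta, \zeta^{-1}\}$ with $\zeta \neq \zeta^{-1}$. The elements with $\zeta = \zeta^{-1}$ are the solutions of $\zeta^2 = 1$ among $n$-th roots of unity other than $1$; there is exactly one such element (namely $\zeta = -1$) when $n$ is even, and none when $n$ is odd. Removing these self-paired elements from the $n-1$ candidates and dividing by $2$ gives $\lfloor (n-1)/2 \rfloor$ pairs in both parities (for $n$ odd: $(n-1)/2$; for $n$ even: $(n-2)/2 = (n-1-1)/2 = \lfloor (n-1)/2\rfloor$). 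Each such pair contributes a one-dimensional piece to the cokernel, and since the whole module is defined over $\mathbb{F}_p$ and $1-T$ is diagonalizable with these eigenvalues, the cokernel has dimension $\lfloor (n-1)/2 \rfloor$ over $\mathbb{F}_p$, yielding $H_2^Q(X;\mathbb{Z}) \cong (\mathbb{Z}/p)^{\lfloor (n-1)/2\rfloor}$.

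The main obstacle I anticipate is the descent from $\overline{\mathbb{F}_p}$ back to $\mathbb{F}_p$: the eigenvectors $v_\zeta$ need not be defined over $\mathbb{F}_p$, and the Frobenius acts on the eigenvalues, permuting $\zeta \mapsto \zeta^p$. One must check that the dimension of $\mathrm{Coker}(1-T)$ is unchanged under the field extension (which is automatic, since dimension of a cokernel of a linear map is preserved under flat base change), so it suffices to compute the rank of $1-T$ over $\overline{\mathbb{F}_p}$ where it is diagonal. A secondary subtlety is to confirm that each eigenvalue of $t$ on $X$ occurs with multiplicity one — this is exactly where separability of $\Phi_n$ (hence $\gcd(n,p)=1$) is used, ensuring $X \otimes \overline{\mathbb{F}_p} \cong \bigoplus_{\zeta} \overline{\mathbb{F}_p}$ with distinct $\zeta$, so that no eigenvalue coincidences inflate or collapse the count of fixed pairs.
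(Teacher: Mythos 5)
Your proposal is correct and follows essentially the same route as the paper's proof: reduce via Corollary~\ref{prob7} to computing $\dim_{\mathbb{F}_p}\mathrm{Coker}(1-T)$, base-change to $\overline{\mathbb{F}}_p$ where separability of $\Phi_n$ (from $\gcd(n,p)=1$) gives an eigenbasis $v_\zeta$ indexed by the $n$-th roots of unity $\zeta\neq 1$, and count the unordered pairs $\{\zeta,\zeta^{-1}\}$ fixed by $T$, yielding $\lfloor\frac{n-1}{2}\rfloor$. Your explicit justification of the descent step (invariance of the cokernel dimension under flat base change) and of the multiplicity-one eigenvalues is a point the paper leaves implicit, but it is the same argument.
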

\begin{proof} By assumption, $X$ is connected and considered over a field $\mathbb{F}_p$.
Thus, $H_2^Q(X)$ is a vector space over $\mathbb{F}_p$ by Theorem \ref{prob1}. 
Thus, it is enough to show that $ \mathrm{dim} (\mathrm{Coker} (1-T))= \lfloor \frac{n-1}{2}\rfloor$.
Let $\overline{\mathbb{F}}_p$ be the algebraic closure of $ \mathbb{F}_p $,
and $X_{\overline{\mathbb{F}}_p} $ be $ \overline{\mathbb{F}}_p \otimes_{\mathbb{Z}/p} X$.
Fix $\mu \in \overline{\mathbb{F}}_p$ as an $n$-th root of unity.
Then, we have the extension
$$ \mathrm{id}_{\overline{\mathbb{F}}_p} \otimes T :
X_{\overline{\mathbb{F}}_p}\wedge_{\overline{\mathbb{F}}_p} X_{\overline{\mathbb{F}}_p} \lra X_{\overline{\mathbb{F}}_p} \wedge_{\overline{\mathbb{F}}_p}X_{\overline{\mathbb{F}}_p}
$$
and $ \mathrm{dim}( \mathrm{Coker} (1- T)) = \mathrm{dim}( \mathrm{Coker} ( 1- \mathrm{id}_{\overline{\mathbb{F}}_p} \otimes T))$. When considering $t : X_{\overline{\mathbb{F}}_p} \ra X_{\overline{\mathbb{F}}_p} $ as a linear map, we can choose the eigenvectors
$v_1 , \dots, v_{n-1} $ with $t v_i = \mu^{i} v_i$.
Since, dim$( X_{\overline{\mathbb{F}}_p})=n-1$, the vectors $v_1,\dots, v_{n-1}$ give a basis of  $X_{\overline{\mathbb{F}}_p}$.
Then, notice $ \mathrm{id}_{\overline{\mathbb{F}}_p} \otimes T ( \sum_{s,t}a_{s,t } v_s \wedge v_{ t}) = \sum_{s,t} a_{s,t } \mu^{s+t} v_s \wedge v_{ t}$, where $ a_{s,t}\in \overline{\mathbb{F}}_p $.
In particular, $T$ can be represented as a diagonal matrix and $ \mathrm{id}_{\overline{\mathbb{F}}_p} \otimes T ( v_j \wedge v_{ n-j}) = v_j \wedge v_{ n-j}$,
which implies that $ \mathrm{dim}( \mathrm{Coker} ( 1- \mathrm{id}_{\overline{\mathbb{F}}_p} \otimes T)) = \lfloor \frac{n-1}{2}\rfloor$, as required. 
\end{proof}



\subsection{Proof of Theorem \ref{prob1}}\label{SS5}


A quandle $X$ is a {\it quasigroup} if there exists a unique $c$ such that $a * c= b$ for any $a,b \in X$. We denote by $a \circ b$ such an element $c$. Observe that if $X$ is a quasigroup, then $X$ is connected.
In particular, an Alexander quandle $X$ is a quasigroup if and only if $(1-t)$ is invertible. We need the following basic properties proven in [NP](see Lemma 2.1, Corollary. 2.2). 
\begin{prop}[\cite{NP}]\label{lem5}
Let $X$ be a quasigroup quandle.
Choose any element $d_0$ in $X$. Then,
\begin{enumerate}
[(I)]
\item{For any element $d_{0}$ in $X$, the quotient map $C^{Q}_{2} \lra C^{Q}_{2}/((d_0,b))$ is an isomorphism when restricted to $\Ker \partial_2^R $. Here, $(d_0,b)$ is the subgroup of $C^{Q}_{2}$ generated by elements of the form $(d_0,b)$ for any $b\in X$, }
\item
The induced map 
\begin{equation}\label{hhh} H_2^Q (X) \lra \frac{\mathbb{Z}[X \times X] }{\{ (a,a), \ (d_0,a) , \ \partial_3^R(a,b,c)\}_{a,b,c \in X} } ,\end{equation}
is an isomorphism.
\end{enumerate}
\end{prop}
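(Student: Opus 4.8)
The plan is to deduce both statements from a single direct-sum decomposition of the quandle chain group $C_2^Q = C_2^R(X)/\langle (a,a)\rangle_{a\in X}$, with the quasigroup structure of $X$ as essentially the only input. Throughout I would regard $\partial_2^R(x,y)=(x)-(x*y)$ as an induced map $\partial_2^R\colon C_2^Q \to C_1^R(X)$, which is legitimate because $\partial_2^R(a,a)=(a)-(a*a)=0$ by idempotency, and I would write $D:=\langle (d_0,b):b\in X\rangle$ for the subgroup in the statement. The one fact I would isolate first is that, by the definition of a quasigroup, for the fixed $d_0$ the assignment $b\mapsto d_0*b$ is a bijection $X\to X$, and by idempotency together with uniqueness it sends $d_0$ to $d_0$ and no other element there.

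For part (I), I would show that $\partial_2^R$ restricts to an isomorphism of $D$ onto $\Im(\partial_2^R)$. In $C_2^Q$ the element $(d_0,d_0)$ is already zero, so $D$ is free on $\{(d_0,b):b\neq d_0\}$, the only diagonal basis element involved being $(d_0,d_0)$. On these generators $\partial_2^R(d_0,b)=(d_0)-(d_0*b)$, and since $b\mapsto d_0*b$ is a bijection fixing only $d_0$, the resulting elements are exactly the differences $(d_0)-(z)$ for $z\neq d_0$. These are linearly independent in the free group $C_1^R(X)$, giving injectivity, and they generate the augmentation ideal $I=\{\sum n_x(x):\sum n_x=0\}$. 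The same computation identifies $I$ with $\Im(\partial_2^R)$: every generator $(x)-(x*y)$ lies in $I$, and conversely each $(d_0)-(z)$ is itself the boundary $\partial_2^R(d_0,b)$ for the unique $b$ with $d_0*b=z$. Hence $\partial_2^R|_D\colon D\xrightarrow{\sim}\Im(\partial_2^R)$, and its inverse splits the exact sequence $0\to\Ker(\partial_2^R)\to C_2^Q\to\Im(\partial_2^R)\to 0$, yielding $C_2^Q=\Ker(\partial_2^R)\oplus D$. This is precisely the claim that the projection $C_2^Q\to C_2^Q/D$ restricts to an isomorphism on $\Ker(\partial_2^R)$.

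For part (II), I would push the target group through this decomposition. The target equals $T:=C_2^Q/(\Im(\partial_3^R)+D)$, where $\Im(\partial_3^R)$ is the subgroup generated by the elements $\partial_3^R(a,b,c)$ of \eqref{33aa2}. Since $\partial_2^R\partial_3^R=0$, we have $\Im(\partial_3^R)\subseteq\Ker(\partial_2^R)$, so under the splitting $C_2^Q=\Ker(\partial_2^R)\oplus D$ the subgroup we quotient by is exactly $\Im(\partial_3^R)\oplus D$. Therefore $T\cong\Ker(\partial_2^R)/\Im(\partial_3^R)$, which, the diagonal having already been killed in $C_2^Q$, is $H_2^Q(X)$ as defined in \eqref{exa9}; unwinding the identifications shows that this isomorphism is the map induced by $\Ker(\partial_2^R)\hookrightarrow C_2^R\to T$, as required.

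The only genuine obstacle is the middle step of part (I): checking that $\partial_2^R$ carries $D$ \emph{isomorphically} onto the full image $\Im(\partial_2^R)$. This is where the quasigroup hypothesis is indispensable, since it is the bijectivity of $b\mapsto d_0*b$ (equivalently, invertibility of $1-t$ in the Alexander setting) that simultaneously forces injectivity on $D$ and surjectivity onto the entire image, and hence produces the splitting. Everything after that is formal homological bookkeeping.
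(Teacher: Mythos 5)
Your proposal is correct: both computations at its heart go through, and the quasigroup hypothesis is used exactly where needed. It does, however, take a different organizational route from the paper's own argument (which follows \cite{NP}): there one views $D=C_2^{d_0}(X)$ as generating a subcomplex, forms the short exact sequence of chain complexes $0 \to C_*^{d_0}(X) \to C_*^Q(X) \to C_*^Q(X)/C_*^{d_0}(X) \to 0$, and extracts the long exact homology sequence; parts (I) and (II) then reduce to proving $H_2^{d_0}(X)=0$, i.e.\ $\Ker(\partial_2^R)\cap D=0$, and $\partial_2^R(D)=\partial_2^R(C_2^Q(X))$, the latter via the identity $(a)-(a*b)=\partial_2^R(d_0,d_0\circ (a*b))-\partial_2^R(d_0,d_0\circ a)$. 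These are precisely your two middle steps --- injectivity of $\partial_2^R|_D$ and surjectivity onto the image, both resting on the bijectivity of $b\mapsto d_0*b$ --- so the essential input is identical; what differs is the packaging. Your splitting argument buys something concrete: it avoids the long exact sequence entirely, yields the explicit direct-sum decomposition $C_2^Q=\Ker(\partial_2^R)\oplus D$ (which is slightly stronger than (I) as stated, since it shows the restricted quotient map is onto all of $C_2^Q/D$), identifies $\Im(\partial_2^R)$ with the augmentation ideal of $\mathbb{Z}\langle X\rangle$, and makes part (II) a one-line quotient computation $(\Ker\partial_2^R\oplus D)/(\Im\partial_3^R\oplus D)\cong \Ker\partial_2^R/\Im\partial_3^R$, using $\partial_2^R\partial_3^R=0$ and \eqref{exa9}. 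The paper's exact-sequence formulation buys uniform chain-level bookkeeping in all degrees at once and is the form that generalizes to analogous subcomplex-versus-quotient computations in higher rack and quandle homology. One small point you glossed, correctly but worth making explicit: the kernel of the map induced by $\partial_2^R$ on $C_2^Q$ is $\Ker(\partial_2^R)/\langle (a,a)\rangle_{a\in X}$ because the diagonal generators already lie in $\Ker(\partial_2^R)$, which is what lets your final identification land on $H_2^Q(X)$ exactly as defined in \eqref{exa9}, with the isomorphism induced by the map in \eqref{hhh}.
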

Furthermore, we will also need the following proposition:
\begin{prop}\label{lem6}
Let $X$ be a quasigroup quandle.
Choose any element $d_0$ in $X$. In the quotient on the right hand side of \eqref{hhh}, the relation $(c, d_0)=0$ holds for any $c \in X$. 
\end{prop}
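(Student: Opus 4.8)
The plan is to work entirely inside the quotient module on the right-hand side of \eqref{hhh} and to deduce the vanishing of the ``second-slot'' elements $(c,d_0)$ from the already-imposed ``first-slot'' relations $(d_0,a)=0$ together with the three-term relations $\partial_3^R(a,b,c)=0$. The guiding observation is that the hypotheses force every generator with $d_0$ in the \emph{first} coordinate to vanish, so I will substitute $d_0$ into the first argument of $\partial_3^R$ and then specialize the remaining variables until only a single term carrying $d_0$ in the \emph{second} coordinate survives.

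Concretely, I would first write out $\partial_3^R(d_0,b,c)=0$, which by \eqref{33aa2} reads
\[ (d_0,c)-(d_0* b,c)-(d_0,b)+(d_0* c,\,b* c)=0 . \]
Since $(d_0,c)=0$ and $(d_0,b)=0$ in the quotient, this collapses to
\[ (d_0* b,\,c)=(d_0* c,\,b* c) \qquad \text{for all } b,c\in X. \]
I would then set $c=d_0$: the right-hand side becomes $(d_0* d_0,\,b* d_0)$, and idempotency $d_0* d_0=d_0$ rewrites it as $(d_0,\,b* d_0)$, which is once more one of the vanishing first-slot generators. Hence $(d_0* b,\,d_0)=0$ for every $b\in X$.

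Finally, I would invoke the quasigroup hypothesis. By definition, for the fixed element $d_0$ there is a unique solution $c$ to $d_0* c=b$ for each $b$, so the map $b\mapsto d_0* b$ is a bijection of $X$; consequently $d_0* b$ runs over all of $X$ as $b$ does, and therefore $(c,d_0)=0$ for every $c\in X$, as required. I do not anticipate a computational obstacle here, as the argument is a short diagram-free manipulation. The one point demanding care is the final step: it is the quasigroup condition (surjectivity of $b\mapsto d_0* b$), and not quandle axiom (II) (bijectivity of $\bullet * b$), that is needed to sweep out all of $X$, so the proof genuinely uses the standing quasigroup assumption rather than a generic quandle property.
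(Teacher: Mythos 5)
Your proof is correct and follows essentially the same route as the paper: the paper computes $\partial_3^R(d_0,a,d_0)$ directly (your specialization $c=d_0$ done from the start), reduces it in the quotient to $-(d_0*a,\,d_0)$, and then uses surjectivity of the left translation $a\mapsto d_0*a$ granted by the quasigroup hypothesis, exactly as you do. Your closing remark is also on point --- it is indeed the quasigroup condition, not quandle axiom (II), that makes $d_0*b$ sweep out all of $X$.
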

\begin{proof}
In the quotient, compute $\partial_3^R(d_0, a, d_0)$ as $ (d_0 ,d_0)-(d_0 * a, d_0) -( d_0,a ) +(d_0* d_0, a * d_0)= -(d_0 * a, d_0)$.
Since $X$ is quasigroup, $d_0 * a$ can be regarded as any element $c$. Therefore, we have $(c, d_0)=0$.
\end{proof}

Now, we give a self-contained proof of Theorem \ref{prob1}. Let $X$ be an Alexander quandle such that $(1-t)$ is invertible, and $d_0= 0$. 

In a quasigroup quandle we can write $[a, (1-t ) b]$ for $(a, b)$.
Then, the boundary map in \eqref{33aa2} has the form:
$$ \partial_3 (a,b,c) = (a,c)-(a* b, c)-(a,b)+(a*c,b*c)
$$
\begin{equation}\label{r1}=
[a, (1-t)c] - [a, (1-t) b] - [ ta +(1-t) b, (1-t)c ] + [ ta +(1-t) c, (1-t)(tb +(1-t) c)].
\end{equation}

\begin{proof}[Proof of Theorem \ref{prob1}]
The following isomorphism follows 
from Propositions \ref{lem5} and \ref{lem6}: 
\begin{equation}\label{eq20} H_2^Q(X;\mathbb{Z}) \cong \frac{ \mathbb{Z} \langle X \times X \rangle }{ \{ [a,a(1-t)], \ [a, 0], \ [0,a], \ \eqref{r1} \}_{a,b,c \in X } } .
\end{equation}
The correspondence $[a,b]\mapsto a\otimes b$ gives rise to
an epimorphism
$$ \frac{ \mathbb{Z} \langle X \times X \rangle }{ \{ [a,a(1-t)], \ [a, 0], \ [0,a], \ \eqref{r1} \}_{a,b,c \in X } } \lra \frac{X\otimes_{\mathbb{Z}} X}{ \{ x \otimes y - y \otimes t x \}_{x,y \in X }}. $$
The goal is to show that this epimorphism is an isomorphism. For this,
it is enough to show that $[, ]$ is bilinear and $[a,b]=[b, ta ]$.

First,
we use \eqref{r1} to show that the equality $[a,b]=[b, ta ]$ holds. 
If we replace $a$ by $x$, $b$ by $ (1-t)^{-1}y$, and $c$ by $(1-t)^{-1}z$, the relation \eqref{r1} in the right hand side of \eqref{eq20} is equivalent to
\begin{equation}\label{r2}
[x, z] - [x, y] - [ tx + y , z ] + [ tx +z, ty + (1-t) z] =0.
\end{equation}
For $ z=0$, $x=0$, and $y=-tx$, we obtain, respectively, 
\begin{equation}\label{r3}
[tx, ty] = [ x, y] ,
\end{equation}
\begin{equation}\label{r4}
[y,z] = [ z, ty+(1-t)z] ,
\end{equation}
\begin{equation}\label{r5}
[x ,-t x] = [x,z]+[tx+z, -t^2 x +(1-t)z].
\end{equation}
Note that by \eqref{r4} the relation in \eqref{r5} can be reduced to
\begin{equation}\label{r7} [x,-t x]=[x,z]+[-x,tx+z] .
\end{equation}
In addition, using \eqref{r4}, the relation in \eqref{r2} is equivalent to
\begin{equation}\label{r8}
0=[x, z] - [x, y] - [ t x + y , z ] + [ y+(t-1)x, tx + z] .
\end{equation}
After replacing $z$ with $y$ in \eqref{r7} we get
\begin{equation}\label{r72} [x,-t x]=[x,y]+[-x,tx+y] .
\end{equation}
Then, using \eqref{r7} and (\ref{r72}), we have
\begin{equation}\label{r9}
0= -[-x, tx+y] + [-x, tx+z] - [ t x + y , z ] + [ y+(t-1)x, tx + z].
\end{equation}
In addition, if we replace $(tx +y)$ by $Y$ and $(tx+z) $ by $Z$, then \eqref{r9} changes to 
\begin{equation}\label{r92}
0 = [-x, Z]-[-x, Y]+[Y,Z-tx]-[Y-x,Z].
\end{equation}
Then, after the substitution $Z=0$, the formula \eqref{r92} gives the desired equality $ [-x,Y]=[Y, -tx],$ for any $x,Y \in X$.


Finally, we show the bilinearlity of the bracket $[,]$. By the previous equality, it is enough to show that $ [y',Z ] +[x', Z]=[x'+y',Z]$ for any $x',y',Z \in X$.
Now, by applying the equality $ [ Y,Z-tx ]=[t^{-1}Z-x, Y]$ to \eqref{r92}, we have
\begin{equation}\label{r10}
[-x, Z]- [Y-x,Z] = [-x, Y]-[ t^{-1}Z -x , Y].
\end{equation}
Then, the substitution, $Z \mapsto Y$, $Y \mapsto t^{-1}Z$ transforms \eqref{r10} into
\begin{equation}\label{r102}
[-x, Y]- [t^{-1}Z -x ,Y] = [-x, t^{-1}Z]- [t^{-1}Y-x,t^{-1}Z].
\end{equation}
Hence, combining \eqref{r10} with \eqref{r102} yields the equation 
\begin{equation}\label{r11}
[-x, Z]- [Y-x,Z] = [-x, t^{-1}Z]- [t^{-1}Y-x,t^{-1}Z].
\end{equation}
In particular, when $ x=Y$, we have the equality $[-x,Z]=[-x, t^{-1}Z]-[ (t^{-1}-1)x, t^{-1} Z]$.
Thus, \eqref{r11} is reduced to
$$ [Y-x,Z] +[ (t^{-1}-1)x, t^{-1} Z] = [t^{-1}Y-x,t^{-1}Z] =[ Y -tx , Z],$$
where we use \eqref{r3} for the last equality.
Then, replacing $(Y-x)$ by $ y'$, and $(1-t)x$ by $ x'$, we have
$ [y',Z ] +[x', Z]=[x'+y',Z]$. 
\end{proof}
From the proof, we have the following corollary.
\begin{cor}\label{cor8} The map $X^2 \ra X \otimes X / \{ x \otimes y - y \otimes tx \}$, which sends $ (x,y) $ to $x \otimes (1-t)y$, is a quandle 2-cocycle.
\end{cor}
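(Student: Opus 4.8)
The plan is to verify directly the two conditions characterizing a quandle $2$-cocycle $\phi\colon X^2 \to A$ with values in the trivial coefficient module $A = Q_X := X\otimes X/\{x\otimes y - y\otimes tx\}$: first the idempotency condition $\phi(a,a)=0$, and second the rack cocycle identity
$$\phi(x,z) - \phi(x* y,z) - \phi(x,y) + \phi(x* z, y* z) = 0,$$
which is exactly the statement that $\phi$, extended $\mathbb{Z}$-linearly, annihilates $\partial_3^R(x,y,z)$ from \eqref{33aa2}. Here $\phi(x,y) = [x\otimes (1-t)y]$, and throughout I use that $(1-t)$ is invertible, so $X$ is a quasigroup and the machinery of Propositions \ref{lem5} and \ref{lem6} applies.

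First I would record the elementary consequences of the single defining relation $x\otimes y = y\otimes tx$ of $Q_X$. Applying it twice gives $x\otimes y = y\otimes tx = tx\otimes ty$, and specializing to $x=y=a$ gives $a\otimes a = a\otimes ta$. The idempotency condition is then immediate:
$$\phi(a,a) = a\otimes(1-t)a = a\otimes a - a\otimes ta = 0 \quad \text{in } Q_X.$$

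For the cocycle identity, I would observe that, after writing $(x,y)$ as $[x,(1-t)y]$ so that $\phi(x,y)=x\otimes(1-t)y$, the expression $\phi(\partial_3^R(x,y,z))$ is precisely relation \eqref{r1} read inside $Q_X$; equivalently, after the substitution $b\mapsto(1-t)^{-1}y$ and $c\mapsto(1-t)^{-1}z$ (legitimate since $(1-t)$ is invertible), it is relation \eqref{r2}. Expanding the four tensor terms by bilinearity of $\otimes$ and simplifying with the identities $tx\otimes ty=x\otimes y$ and $x\otimes y=y\otimes tx$ from the previous step collapses the sum to $0$. This is the same computation carried out in the proof of Theorem \ref{prob1}, now run in the direction \textit{relations of $Q_X$ imply \eqref{r1}} rather than the converse, and it is short and mechanical.

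Alternatively, and perhaps most cleanly, I would deduce the statement from Theorem \ref{prob1} together with the presentation of $H_2^Q(X)$ furnished by Proposition \ref{lem5}: the isomorphism of Theorem \ref{prob1} sends the class of the generator $(a,b)$ to $a\otimes(1-t)b = \phi(a,b)$, while in that presentation the relations $(a,a)=0$ and $\partial_3^R(a,b,c)=0$ hold by construction. Transporting these relations across the isomorphism yields exactly $\phi(a,a)=0$ and $\phi(\partial_3^R(a,b,c))=0$ in $Q_X$, which are the two defining conditions of a quandle $2$-cocycle. I expect no genuine obstacle, since all of the substantive work is already contained in Theorem \ref{prob1}; the only point requiring care is checking that the identification of $\phi$ with the class of the generator $(a,b)$ is the correct one, so that the relations quotiented in the presentation of $H_2^Q(X)$ match the cocycle conditions term by term.
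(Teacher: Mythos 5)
Your proposal is correct and matches the paper's route: the paper derives Corollary \ref{cor8} with the single line ``From the proof, we have the following corollary,'' meaning exactly that the correspondence $(x,y)\mapsto x\otimes(1-t)y$ factors through the presentation \eqref{eq20} of $H_2^Q(X;\mathbb{Z})$ and hence annihilates $(a,a)$ and $\mathrm{Im}\,\partial_3^R$ --- which is your second route, while your first route (directly checking $\phi(a,a)=0$ and $\phi(\partial_3^R(x,y,z))=0$ in $Q_X$ using $x\otimes y=y\otimes tx$ and $tx\otimes ty=x\otimes y$) is precisely the well-definedness half of the paper's proof of Theorem \ref{prob1} made explicit, and it does collapse to zero as you claim (the residual term $(1-t)z\otimes(1-t)^2z$ vanishes by the relation with $x=y=(1-t)z$, which your idempotency computation already covers). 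No gap.
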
 

\subsection{Non-Alexander quandles}\label{SS45}
Up until now, we mainly focused on Alexander quandles. In general, it is not easy to compute $H_2^Q(X)$ using Schur multipliers. 
In fact, we have verified that, for any connected non-Alexander quandle of order $\leq 15$, the map $\mathcal{T}$ in Definition \ref{def4} is zero.
Consider the following example. 
\begin{exa}\label{example1}
Let $\mathfrak{S}_n $ be the permutation group on $n$ elements, and
let $X$ be the subset $\{ g^{-1} (12) g\}_{g \in  \mathfrak{S}_n } \subset \mathfrak{S}_n $. Then, $X$ is of order $n(n-1)/2 $,
and the conjugacy operation makes $X$ into a connected quandle of type 2. 
Then, $ \mathrm{Inn}(X)\cong \mathfrak{S}_n $ and $\mathrm{Stab}(x_0) = \mathfrak{S}_{n-1} $. In particular, $H_1^{ \rm gr}( \mathrm{Stab}(x_0);\mathbb{Z}) \cong \mathbb{Z}/2\mathbb{Z}$. Moreover, it is known that $H_2^Q (X) \cong \mathbb{Z}/2\mathbb{Z}$ for $n>3$ (See \cite[Example 1.18]{Eis3}), but by diagram chasing in \eqref{33aad}, 
the map $\mathcal{T}$ turns out to be zero.
\end{exa}
Furthermore, there are few examples of connected quandles $ X$ such that the orders of $H_2^Q(X)$ and $\mathrm{Type}(X)$ are relatively prime. Thus, in the future one may consider finding other families of quandles for which Theorem \ref{thm3} is applicable.

\section{Alexander quandles and semi-Hopfian groups}
In this section, we give an example of an Alexander quandle which is connected but not a quasigroup. 

Analogous to the Hopfian condition\footnote{A group $X$ is called a Hopfian group if every epimorphism from $X$ to $X$ is an isomorphism.} for groups, we define a semi-Hopfian Abelian group as follows:

\begin{defn}
	An Abelian group $X$ is called semi-Hopfian if for every epimorphism $f: X \to X$,  such that $(1-f)$ is an 
	isomorphism, $f$ is also an isomorphism.
\end{defn} 

It is clear that every Abelian Hopfian group is semi-Hopfian. In particular, every finitely generated Abelian group is semi-Hopfian. Examples of non semi-Hopfian groups are well known \cite{Br}. For instance, consider the following construction: 

\begin{construction}\label{constr}
Consider the countable direct sum of the group $\mathbb Z$ indexed by positive numbers: 
	$$X=\bigoplus_{i>0} \mathbb{Z}^{(i)}$$
	where $1_i$ is the identity of $\mathbb{Z}^{(i)}$.\\
	Let $t:X\to X$ be an epimorphism defined on the basis by $f(1_i)=1_{i-1}$ for $i>1$ and $f(1_1)=0$.
	Clearly $f$ is not a monomorphism. We have $(1-f)(1_i)= t_i-t_{i-1}$ and observe that $(1-t)$ is invertible since:
	$$(1-t)^{-1}(1_i)=1_i+1_{i-1}+...+1_1.$$
\end{construction}

\

Construction \ref{constr} gives an example of an Alexander quandle which is connected but not a quasigroup. Namely, if we put $t=1-f$, then $(X,*)$, with $a*b=ta +(1-t)b$, is a quandle which is not a quasigroup since $1-t=f$  is not invertible. However, it is connected since $(1-t)X=X$.

\noindent
{\footnotesize
{\it Email address}: {\tt rhea\_palak@gwu.edu} \\
{\it Email address}: {\tt dfkunkel@gwu.edu} \\
{\it Email address}: {\tt sujoymukherjee@gwu.edu} \\
{\it Email address}: {\tt nosaka.t.aa@m.titech.ac.jp} \\
{\it Email address}: {\tt przytyck@gwu.edu}
}
\end{document}